\newtheorem{thm}{Theorem}
\newtheorem{lem}{Lemma}
\newcommand{\sabs}[1]{\left|#1\right|}
\newcommand{\sparen}[1]{\left(#1\right)}
\newcommand{\norm}[1]{\sabs{\sabs{#1}}}
\numberwithin{equation}{section}
\title{A Parametrix Construction for the Wave Equation with Low Regularity Coefficients Using a Frame of Gaussians}
\author{Alden Waters}
\date{February 3, 2010}
\begin{document}
\maketitle

\begin{abstract}
We construct a frame of complex Gaussians for the space of $L^2(\mathbb{R}^n)$ functions.  When propagated along bicharacteristics for the wave equation, the frame can be used to build a parametrix with suitable error terms.  When the coefficients of the wave equation have more regularity, propagated frame functions become Gaussian beams. 
\end{abstract}

\section*{Introduction}

In \emph{A Parametrix Construction for the Wave Equation with $C^{1,1}$ Coefficients} Hart Smith constructed a parametrix solution for the wave equation using a frame that is now called curvelets.  In this paper we construct a new frame out of Gaussian functions.  When a Gaussian function is  propagated along the ray, it becomes a Gaussian beam which looks like a Gaussian distribution on planes perpendicular to a ray in space-time. While the existence of such solutions has been known to the pure mathematics community since the 1960's, there has been a recent revival of interest given their robustness in approximating solutions to PDE's.

Recently Nicolay Tanushev numerically simulated mountain waves with a high degree of accuracy using superpositions of high frequency Gaussian beams. Because Gaussian beams are concentrated along a single ray, it is desirable to use many of them to represent a solution because a global solution is rarely concentrated along a single curve. Tanushev's thesis showed that Gaussian beams have several major advantages over other techniques used to numerically approximate the solution to a mountain wave. Motivated by these numerical calculations we will show that a frame consisting entirely of complex Gaussians can be used to build an accurate parametrix to the wave equation. The idea of using complex Gaussians to build a parametrix is not new. In his paper \emph{Strichartz Estimates for Operators with Nonsmooth Coefficients and the Nonlinear Wave Equation}, Daniel Tataru constructed a parametrix to the wave equation with low regularity coefficients using a modified FBI transform. While the solution in his paper is elegant, numerical calculations with such a construction would be difficult if not impossible. Representing initial data in terms of a frame of Gaussians may lead to more viable and accurate numerical solutions.  

For the rest of this paper we will consider the wave equation 
\[
\partial_t^2u(t,x)-A(t,x,\partial_x)u(t,x)=\partial_t^2u(t,x)-\sum\limits_{1\leq i,j\leq n}a_{ij}(t,x)\partial_{x_i}\partial_{x_j}u(t,x)=0
\]
and we let 
\[
A(x,t)=\{a_{ij}(x,t)\}_{1\leq i,j\leq n}.
\]
 We assume that the matrix $A$ is uniformly positive definite and bounded, that is there exists a constant $C>0$ with  
\[
\frac{|\xi|^2}{C}\leq \sum\limits_{1\leq i,j\leq n}a_{ij}(t,x)\xi_i\xi_j \leq C|\xi|^2
\]
for all $(t,x,\xi)$ in $[-T,T]\times \mathbb{R}^n\times \mathbb{R}^n$. Here $T$ is fixed and finite. Furthermore we assume the entries of the matrix, $a_{ij}(x,t)$ with $(t,x)\in [-T,T]\times \mathbb{R}^n$, are in $C^{1,1}$. $C^{1,1}$ coefficients are of interest because they are minimally  regular. They satisfy a Lipschitz condition in $x$ and $t$ 
\[
|a_{ij}(t,x)-a_{ij}(t',x')|\leq C(|t-t'|+|x-x'|)
\]
and their  first derivatives in $x$ satisfy a Lipschitz condition 
\[
|\nabla_xa_{ij}(t,x)-\nabla_xa_{ij}(t,x')|\leq C|x-x'|
\]

This paper is divided into three major parts. The focus of section 1 is the introduction of a frame of Gaussian functions,  which will represent elements of the Hilbert space $L^2(\mathbb{R}^n)$. Section 2 introduces Gaussian beams and contains the necessary estimates for the construction of a parametrix for the wave equation with $C^{1,1}$ coefficients. Finally section 3 contains the actual parametrix construction. 
\section{Construction of the Frame}

Let the set of functions $\{\phi_{\gamma}(x)\}_{\gamma\in\Gamma}$ be defined as follows:
\[
\phi_{\gamma}(x)=\sparen{\frac{|\xi_{\gamma}|\Delta x_{\gamma}}{2\pi}}^{\frac{n}{2}}\exp \sparen{i\xi_{\gamma}\cdot(x-x_{\gamma})-|\xi_{\gamma}||x-x_{\gamma}|^2}
\]
where $\gamma$ is the index $\gamma=(i,k,\alpha)$, $i\in I_k$, where $I_k$ is a finite subset of integers that depends  on $k$, $k\in \mathbb{N}$, and $\alpha\in\mathbb{Z}^n$. In the first two lemmas we will pick $\xi_{\gamma}=2^k\omega_{i,k}$ a vector in $\mathbb{R}^n$ with $\frac{1}{2}\leq |\omega_{i,k}|<1$ and $x_{\gamma}=\Delta x_{\gamma}\alpha$ another vector in $\mathbb{R}^n$ with $\Delta x_{\gamma}$ a scale factor depending on $k$. We will show that these vectors can be chosen so that the set of functions $\{\phi_{\gamma}(x)\}_{\gamma\in\Gamma}$ form a frame for $L^2(\mathbb{R}^n)$. Not only will our chosen set of $\{\phi_{\gamma}(x)\}_{\gamma\in\Gamma}$ form a frame in $L^2(\mathbb{R}^n)$, but we will show that weighted sequences of frame functions are comparable to the $m^{th}$ Sobolev norm (provided it exists) of any $f(x)$. In particular:

\begin{thm}\label{theoremone}
For any finite $m\geq 0$ and $f(x)\in H^{m}(\mathbb{R}^n)$ there exist constants $C_1$ and $C_2$ independent of $\gamma$ with $0<C_1\leq C_2$ such that the following holds:
\begin{equation}\label{framecondition}
0<C_1\norm{f(x)}_ {\dot{H}^{m}(\mathbb{R}^n)}^2\leq \sum\limits_{\gamma}|2^{km}c(\gamma)|^2\leq C_2 \norm{f(x)}_ {\dot{H}^{m}(\mathbb{R}^n)}^2
\end{equation}
with 
 \[
c(\gamma)=\int_{\mathbb{R}^n}\overline{\phi_{\gamma}(x)}f(x)\,dx.
\]
\end{thm}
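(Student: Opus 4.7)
The plan is to work in the Fourier domain and reduce the two-sided bound to a Gabor-type sampling estimate at each dyadic scale, followed by a Littlewood--Paley sum in $k$. A direct Gaussian-integral calculation gives
\[
\hat\phi_\gamma(\xi)= \sparen{\frac{\Delta x_\gamma}{2}}^{n/2} e^{-ix_\gamma\cdot\xi}\,e^{-|\xi-\xi_\gamma|^2/(4|\xi_\gamma|)},
\]
so each $\phi_\gamma$ is a wave packet concentrated in frequency near $\xi_\gamma=2^k\omega_{i,k}$ with width $|\xi_\gamma|^{1/2}\sim 2^{k/2}$, and in space near $x_\gamma$ with width $2^{-k/2}$. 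The natural scaling then dictates $\Delta x_\gamma \sim 2^{-k/2}$.

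For fixed $(i,k)$, the map $\alpha\mapsto c(i,k,\alpha)$ is the restriction of
\[
F(y)=\int_{\mathbb R^n} \overline{\phi_{(i,k,0)}(x-y)}\,f(x)\,dx
\]
to the lattice $\Delta x_\gamma\,\mathbb Z^n$, and $\hat F(\xi)=\overline{\hat\phi_{(i,k,0)}(\xi)}\,\hat f(\xi)$. Parseval applied to the sampled signal gives
\[
\sum_{\alpha}|c(i,k,\alpha)|^2 = \frac{1}{(2\pi\Delta x_\gamma)^n}\int_{[0,2\pi/\Delta x_\gamma]^n} \sabs{\sum_{\beta\in\mathbb Z^n}\hat F\sparen{\xi+\tfrac{2\pi\beta}{\Delta x_\gamma}}}^2 d\xi.
\]
Choosing $\Delta x_\gamma=c\cdot 2^{-k/2}$ with $c$ a small absolute constant pushes the aliasing translates $\hat F(\,\cdot+2\pi\beta/\Delta x_\gamma)$ for $\beta\ne 0$ far outside the effective support of $\hat\phi_{(i,k,0)}$; the $\beta=0$ term dominates, and one obtains
\[
\sum_\alpha |c(i,k,\alpha)|^2 \;\asymp\; \int_{\mathbb R^n} e^{-|\xi-\xi_\gamma|^2/(2|\xi_\gamma|)}\,|\hat f(\xi)|^2\,d\xi,
\]
with constants independent of $k$ and $i$.

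Next I would take the directions $\{\omega_{i,k}\}_{i\in I_k}$ to be a maximal $c'\cdot 2^{-k/2}$-separated set in the shell $\{\omega:1/2\le|\omega|<1\}$, which forces $|I_k|\sim 2^{(n-1)k/2}$. A Riemann-sum comparison then yields
\[
\sum_{i\in I_k}e^{-|\xi-\xi_\gamma|^2/(2|\xi_\gamma|)}\;\asymp\; 1 \quad\text{for } |\xi|\in[2^{k-1},2^k),
\]
with rapid Gaussian decay off that annulus. Weighting by $2^{2km}\asymp|\xi|^{2m}$ on this annulus and summing in $k\in\mathbb N$ recovers, by Plancherel, $\int|\xi|^{2m}|\hat f(\xi)|^2\,d\xi=\norm{f}_{\dot H^m}^2$.

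The main technical hurdle is making the spatial sampling estimate and the angular covering quantitative \emph{simultaneously}, with constants independent of $k$. The natural rescaling $\xi=2^k\eta$ converts every scale to a single fixed model---Gaussians of unit frequency-width tiling $|\eta|\sim 1$, sampled spatially on a unit lattice---whose Gabor-frame bounds are a classical fact about Gaussian Gabor frames at sub-critical density, so uniform constants transfer back to all scales. A secondary point is controlling the cross-scale overlap between $k$ and $k\pm 1$: the Gaussian decay off each annulus ensures only bounded total multiplicity, which absorbs into $C_1,C_2$. The low-frequency regime $|\xi|\lesssim 1$ is only crudely covered by the $k=0$ packets, but the $\dot H^m$ weight $|\xi|^{2m}$ suppresses this contribution for $m\ge 0$.
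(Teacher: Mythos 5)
Your route—pass to the Fourier side, treat each scale $k$ by a sampling estimate, then glue with an approximate partition of unity in $\xi$—is the same skeleton as the paper's (Lemma \ref{lemmaone} is your angular/radial covering, Lemma \ref{lemmatwo} is your aliasing control via Poisson summation rather than your Parseval-for-sampled-signals formulation, which are dual statements). However there is a genuine gap at the step you identify as "the $\beta=0$ term dominates." The per-$(i,k)$ two-sided estimate
\[
\sum_\alpha |c(i,k,\alpha)|^2 \;\asymp\; \int e^{-|\xi-\xi_\gamma|^2/(2|\xi_\gamma|)}|\hat f(\xi)|^2\,d\xi
\]
is false in general. Expanding $\bigl|\sum_\beta \hat F(\cdot+2\pi\beta/\Delta x_\gamma)\bigr|^2$, a cross term indexed by $\beta''=\beta-\beta'\ne 0$ carries a Gaussian factor of size $e^{-|2\pi\beta''/\Delta x_\gamma|^2/(8|\xi_\gamma|)}$ centered at $\xi_\gamma-\pi\beta''/\Delta x_\gamma$, while the $\beta=0$ "main" term evaluated at that same center point has exactly matching decay $e^{-|\pi\beta''/\Delta x_\gamma|^2/(2|\xi_\gamma|)}$. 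So if $\hat f$ is concentrated near such a half-aliased point, the cross term is \emph{comparable} to the main term at that fixed $(i,k)$—not small relative to it. The aliasing can only be shown small \emph{in aggregate} over all $(i,k)$, which is what Lemma \ref{lemmatwo} achieves by a Schur test. This also forces the spacing $\Delta x_\gamma = C_\epsilon 2^{-k/2-\epsilon k}$ with $\epsilon>0$: in the Schur sum the $k$-th scale contributes on the order of $2^{2km}2^{nk/2}e^{-a(k)}$, and with $a(k)$ constant (as your choice $\Delta x_\gamma \sim c\,2^{-k/2}$ gives) this series diverges. Your plan to bypass this by rescaling to "a classical fact about Gaussian Gabor frames at sub-critical density" does not apply as stated either: at fixed $(i,k)$ the family $\{\phi_{(i,k,\alpha)}\}_\alpha$ has a \emph{single} modulation frequency $\xi_\gamma$ and varying translates, so it is a pure translate system rather than a Gabor system, and the Lyubarskii--Seip density theorem is not the relevant tool. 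Finally, a small counting slip: you state $|I_k|\sim 2^{(n-1)k/2}$, which corresponds to packing only tangentially on a sphere; but the approximate partition of unity requires packing the whole dyadic shell $\{2^{k-1}\le|\xi|<2^k\}$ radially as well, giving $|I_k|\sim 2^{nk/2}$ (as the paper uses), and a purely tangential packing would leave gaps in the radial direction wider than the Gaussian width $2^{k/2}$.
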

For this paper we will use the convention that the Fourier Transform for a function $h(u)\in L^2(\mathbb{R}^n)$ is defined as
\[
\widehat{h(\eta)}=\int\limits_{\mathbb{R}^n} e^{-i\eta\cdot u}h(u)\,du.
\]
We will also need to introduce the following functions 
\[
\psi_{\gamma}(w)=\sparen{\frac{|\xi_{\gamma}|}{2\pi}}^{\frac{n}{2}}\exp \sparen{i\xi_{\gamma}\cdot w-|\xi_{\gamma}||w|^2}.
\]
The only difference between $\psi_{\gamma}(x-u)$ and $\phi_{\gamma}(x)$ is that the discrete variable $x_{\gamma}$ is now a continuous one, $u$, and there is no factor of $(\Delta x_{\gamma})^{\frac{n}{2}}$. Here we note that 
\[
|\widehat{\psi_{\gamma}(\xi)}|^2=2^{-n}\exp\sparen{-\frac{|\xi-\xi_{\gamma}|^2}{2|\xi_{\gamma}|}}.
\]
In Lemma \ref{lemmaone} we construct an approximate partition of unity from the sum of the squares of the Fourier transforms of the $\psi_{\gamma}(w)$. 

\begin{lem}\label{lemmaone}
One can chose $\omega_{i,k}$, $i\in I_k, k\in \mathbb{N}$ with $\frac{1}{2}\leq |\omega_{i,k}|<1$ so that the inequalities 
\begin{equation}\label{partition} 
0<C'_1|\xi|^{2m}\leq \sum\limits_{(i,k)}2^{2km}\exp\sparen{-\frac{|\xi-2^k\omega_{i,k}|^2}{2|2^k\omega_{i,k}|}}\leq C'_2|\xi|^{2m}
\end{equation}
hold for all $\xi\in \mathbb{R}^n$, $m\geq 0$, finite. Here $C'_1$ and $C'_2$ are constants independent of $\xi$.
\end{lem}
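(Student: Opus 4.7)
The strategy is to choose the directions $\omega_{i,k}$ so that they form a $2^{-k/2}$-dense subset of the unit annulus $A=\{\omega\in\mathbb{R}^n: 1/2\leq|\omega|<1\}$, and then to show that under this choice the sum in (\ref{partition}) behaves like a Littlewood--Paley partition of unity weighted by $|\xi|^{2m}$. Concretely, for each $k\in\mathbb{N}$ I would take $\{\omega_{i,k}\}_{i\in I_k}$ to be a maximal $2^{-k/2}$-separated subset of $A$. By maximality every point of $A$ lies within distance $2^{-k/2}$ of some $\omega_{i,k}$, and a volume count gives $|I_k|\leq C 2^{kn/2}$. The rescaled centers $2^k\omega_{i,k}$ are then $\sim 2^{k/2}$-separated and fill the dyadic annulus $\{2^{k-1}\leq|\eta|<2^k\}$, which matches the natural $\sim 2^{k/2}$ width of the Gaussian factor $\exp(-|\xi-2^k\omega_{i,k}|^2/(2\cdot 2^k|\omega_{i,k}|))$.

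For the lower bound, given $\xi$ with $|\xi|\geq 1$, choose $k_0$ so that $2^{k_0-1}\leq|\xi|<2^{k_0}$. Then $\xi/2^{k_0}\in A$, so by the covering property there exists $i_0\in I_{k_0}$ with $|\xi-2^{k_0}\omega_{i_0,k_0}|\leq 2^{k_0/2}$. The corresponding single term in the sum is then at least $2^{2k_0 m}e^{-1}$, which is comparable to $|\xi|^{2m}$; since the full sum dominates any one term, this produces the constant $C'_1$.

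For the upper bound I would split over scales into three regimes relative to $k_0$. For $k\leq k_0-2$ the centers satisfy $|2^k\omega_{i,k}|\leq 2^k\leq|\xi|/2$, so $|\xi-2^k\omega_{i,k}|\geq|\xi|/2$ and the Gaussian factor is at most $\exp(-c\,|\xi|^2/2^k)$; multiplying by $|I_k|\leq C 2^{kn/2}$ and by $2^{2km}$ and summing over such $k$ leaves something super-exponentially small in $|\xi|$, hence negligible compared to $|\xi|^{2m}$. For $k\geq k_0+2$ the centers lie at distance at least $2^{k-2}$ from $\xi$, giving a Gaussian factor $\exp(-c\,2^k)$ which easily absorbs both $2^{kn/2}$ and $2^{2km}$, and again sums to a negligible quantity. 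For the middle scales $k\in\{k_0-1,k_0,k_0+1\}$, the centers $2^k\omega_{i,k}$ are $\sim 2^{k/2}$-separated and the Gaussian standard deviation is $\sim 2^{k/2}$, so a Schur/packing estimate gives
$$\sum_{i\in I_k}\exp\sparen{-\frac{|\xi-2^k\omega_{i,k}|^2}{2\cdot 2^k|\omega_{i,k}|}}\leq C$$
uniformly in $\xi$. Each of these three scales then contributes at most $C\cdot 2^{2km}\sim|\xi|^{2m}$, producing $C'_2$.

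The main technical obstacle is the uniform packing estimate in the middle regime. I would establish it by partitioning the centers into shells $\{i:j\cdot 2^{k/2}\leq|\xi-2^k\omega_{i,k}|<(j+1)2^{k/2}\}$ for $j=0,1,2,\dots$: the separation of the centers forces at most $C(1+j)^{n-1}$ points per shell, while the Gaussian weight on each shell is bounded by $e^{-cj^2}$, and $\sum_{j\geq 0}(1+j)^{n-1}e^{-cj^2}<\infty$. A secondary issue is purely notational: the argument above is cleanest for $|\xi|\geq 1$, and the behavior near the origin does not play a role in the Sobolev estimates of Theorem~\ref{theoremone}, so that range can be handled trivially from the nonnegativity of the sum on the left and uniform boundedness on the right.
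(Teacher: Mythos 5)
Your proposal is correct and follows essentially the same strategy as the paper: choose a maximal $2^{k/2}$-separated set of centers in each dyadic annulus, obtain the lower bound from the single nearest center, and obtain the upper bound by splitting into distant scales (exponential decay dominates the $2^{2km}$ and cardinality factors) versus comparable scales (packing/shell count against Gaussian decay). Your merging of the paper's comparable-scale cases $\mathcal{A}$, $\mathcal{B}$, $\mathcal{C}$ into a single shell decomposition over $j$, with at most $C(1+j)^{n-1}$ centers per shell, is a modest streamlining, but the underlying counting and decay estimates are the same as the paper's.
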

For clarity, we will save the proof of Lemma \ref{lemmaone} and Lemma \ref{lemmatwo} below until the end of the proof of  Theorem \ref{theoremone}. For Lemma \ref{lemmatwo}, we will pick $\Delta x_{\gamma}$ so that we can approximate the center term in the inequality (\ref{framecondition}) by an expression which no longer involves $\alpha$, effectively turning the summation over $\alpha$ into an integral. 
\begin{lem}\label{lemmatwo}
For fixed $k\in\mathbb{N}$, let $\Delta x_{\gamma}$ be equal $C_{\epsilon}2^{-\frac{k}{2}-\epsilon k}$ with $\epsilon>0$ and $C_{\epsilon}$ a small constant independent of k and dependent on $\epsilon$. Then for every $\epsilon>0$ there exists a choice of $C_{\epsilon}$ such that the following holds
\begin{align*}\label{norminequality}
&\left|
\sum\limits_{\gamma}\int\limits_{\mathbb{R}^n}\int\limits_{\mathbb{R}^n}2^{2km}f(x)\phi_{\gamma}(x-\alpha\Delta x_{\gamma})\overline{f(x')}\overline{\phi_{\gamma}(x'-\alpha\Delta x_{\gamma})} \,dx\,dx'\right. -  \\ \nonumber&
\left. \sum\limits_{(i,k)}\int\limits_{\mathbb{R}^n}\int\limits_{\mathbb{R}^n}\int\limits_{\mathbb{R}^n}2^{2km}\psi_{\gamma}(x-u)f(x)\overline{\psi_{\gamma}(x'-u)f(x')}\,du\,dx\,dx' \right| \leq \frac{\pi^ne^{-1}}{2}\norm{f}_{L^{2}(\mathbb{R}^n)}^2
\end{align*} 
\end{lem}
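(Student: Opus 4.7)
The approach is to view the first expression as a Riemann-sum discretization, with mesh $\Delta x_{\gamma}$, of the integral appearing in the second, and to control the discretization error via the Poisson summation formula applied in the $\alpha$-variable. Since $\phi_{\gamma}(x)=(\Delta x_{\gamma})^{n/2}\psi_{\gamma}(x-\alpha\Delta x_{\gamma})$, setting
\[
h_{\gamma}(u)=\int_{\mathbb{R}^{n}}\overline{\psi_{\gamma}(x-u)}\,f(x)\,dx,\qquad G_{\gamma}(u)=|h_{\gamma}(u)|^{2}
\]
makes the first term of the lemma equal to $\sum_{(i,k)}2^{2km}(\Delta x_{\gamma})^{n}\sum_{\alpha}G_{\gamma}(\alpha\Delta x_{\gamma})$ and the second equal to $\sum_{(i,k)}2^{2km}\int G_{\gamma}(u)\,du$. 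As $\psi_{\gamma}$ is Schwartz, so is $G_{\gamma}$, and Poisson summation yields
\[
(\Delta x_{\gamma})^{n}\sum_{\alpha}G_{\gamma}(\alpha\Delta x_{\gamma})-\int G_{\gamma}(u)\,du=\sum_{\beta\in\mathbb{Z}^{n}\setminus\{0\}}\widehat{G_{\gamma}}\!\left(\frac{2\pi\beta}{\Delta x_{\gamma}}\right),
\]
so the quantity to bound reduces to a double sum over $(i,k)$ and $\beta\neq 0$.

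I then estimate $\widehat{G_{\gamma}}(\xi)$ using the explicit Gaussian $\widehat{\psi_{\gamma}}(\eta)=2^{-n/2}\exp(-|\eta-\xi_{\gamma}|^{2}/(4|\xi_{\gamma}|))$. Since $\widehat{h_{\gamma}}(\eta)=\overline{\widehat{\psi_{\gamma}}(\eta)}\,\widehat{f}(\eta)$, the product-to-convolution rule applied to $G_{\gamma}=h_{\gamma}\overline{h_{\gamma}}$ gives
\[
\widehat{G_{\gamma}}(\xi)=\frac{1}{(2\pi)^{n}}\int\overline{\widehat{\psi_{\gamma}}(\eta)}\,\widehat{\psi_{\gamma}}(\eta-\xi)\,\widehat{f}(\eta)\,\overline{\widehat{f}(\eta-\xi)}\,d\eta,
\]
and completing the square produces the crucial identity
\[
\widehat{\psi_{\gamma}}(\eta)\widehat{\psi_{\gamma}}(\eta-\xi)=2^{-n}\exp\!\left(-\frac{|\xi|^{2}}{8|\xi_{\gamma}|}\right)\exp\!\left(-\frac{|\eta-\xi_{\gamma}-\xi/2|^{2}}{2|\xi_{\gamma}|}\right).
\]
Applying AM-GM, $|\widehat{f}(\eta)\widehat{f}(\eta-\xi)|\leq\tfrac{1}{2}(|\widehat{f}(\eta)|^{2}+|\widehat{f}(\eta-\xi)|^{2})$, and substituting $\xi=2\pi\beta/\Delta x_{\gamma}$ with $\Delta x_{\gamma}=C_{\epsilon}2^{-k/2-\epsilon k}$ and $|\xi_{\gamma}|\leq 2^{k}$ converts the first exponential into $\exp\!\left(-\pi^{2}|\beta|^{2}2^{2\epsilon k}/(2C_{\epsilon}^{2})\right)$, producing geometric decay in both $|\beta|$ and $k$.

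The final step interchanges the $\eta$-integral with the sums over $(i,k)$ and $\beta\neq 0$, handling the remaining Gaussian-in-$\eta$ factor by Lemma~\ref{lemmaone} and using the super-exponential $k$-decay from $\exp(-2^{2\epsilon k})$ to absorb the polynomial factor $2^{2km}|I_{k}|$. Combined with Plancherel, this yields an error bound of the form $C(\epsilon,C_{\epsilon})\,\|f\|_{L^{2}(\mathbb{R}^{n})}^{2}$ in which $C(\epsilon,C_{\epsilon})\to 0$ as $C_{\epsilon}\to 0$; choosing $C_{\epsilon}$ small enough then enforces $C\leq \pi^{n}e^{-1}/2$. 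The main obstacle is the Gaussian bookkeeping required to show that the $\exp(-|\xi|^{2}/(8|\xi_{\gamma}|))$-decay simultaneously dominates the growth of $2^{2km}$, the cardinality $|I_{k}|$, and the lattice sum over $\beta$; recovering the explicit constant $\pi^{n}e^{-1}/2$ rather than an abstract one requires a careful quantitative choice of $C_{\epsilon}$ after all Gaussian prefactors have been collected.
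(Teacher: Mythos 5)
Your proposal is correct in its core idea and follows the same central mechanism as the paper's proof: Poisson summation in the lattice variable $\alpha$, identifying the zero mode $\beta=0$ with the continuous integral, and then exploiting the Gaussian decay in the nonzero modes $\widehat{(\cdot)}(2\pi\beta/\Delta x_{\gamma})$ together with the scaling $\Delta x_{\gamma}=C_{\epsilon}2^{-k/2-\epsilon k}$ to make the error uniformly small. What differs is the object to which Poisson summation is applied and the tool used afterward. The paper keeps $x,x'$ free: it sets $g_{\gamma}(u,x,x')(\Delta x_{\gamma})^n=2^{2km}\phi_{\gamma}(x-u)\overline{\phi_{\gamma}(x'-u)}$, Poisson-sums in $u$, and then bounds the resulting operator kernel $\sum_{\beta\neq 0}\hat g_{\gamma}(2\pi\beta/\Delta x_{\gamma},x,x')$ by Schur's test (which cleanly produces the explicit constant $\pi^n e^{-1}/2$). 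You instead contract against $f$ first, work with the scalar function $G_{\gamma}(u)=|\langle\psi_{\gamma}(\cdot-u),f\rangle|^2$, Poisson-sum that, and then estimate $\widehat{G_{\gamma}}$ via the convolution formula, AM--GM, and Plancherel. Both routes hinge on the same exponential $\exp(-|\xi|^2/8|\xi_{\gamma}|)\rvert_{\xi=2\pi\beta/\Delta x_{\gamma}}\approx\exp(-\pi^2|\beta|^2 2^{2\epsilon k}/2C_{\epsilon}^2)$ dominating $2^{2km}|I_k|$; yours trades the Schur test for Plancherel. One small simplification worth noting in your final step: you don't actually need Lemma~\ref{lemmaone} there — after AM--GM it suffices to bound the $\eta$-Gaussian by $1$ and invoke Plancherel to get $\norm{f}_{L^2}^2$, since the double-exponential $k$-decay already absorbs $2^{2km}|I_k|$; bringing in Lemma~\ref{lemmaone} introduces a $|\eta\pm\xi/2|^{2m}$ weight that doesn't combine with the $\exp(-|\xi|^2/8|\xi_{\gamma}|)$ factor as neatly as the direct $L^2$ bound.
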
  
\begin{proof}[ Proof of Theorem \ref{theoremone}]
If we let
\[
\int\limits_{\mathbb{R}^n}\int\limits_{\mathbb{R}^n}\int\limits_{\mathbb{R}^n}2^{2km}\psi_{\gamma}(x-u)\overline{\psi_{\gamma}(x'-u)}f(x)f(x')\,du\,dx\,dx'
\]
then the kernel of this expression can be rewritten as
\[
\int\limits_{\mathbb{R}^n}2^{2km}\psi_{\gamma}(x-u)\overline{\psi_{\gamma}(x'-u)}\,du=(2\pi)^n\int\limits_{\mathbb{R}^n}e^{i\xi\cdot(x-x')}2^{2km}|\widehat{\psi_{\gamma}(\xi)}|^2\,d\xi
\]
since the Fourier Transform is an isometry on $L^2(\mathbb{R}^n)$. As remarked earlier, 
\[
|\widehat{\psi_{\gamma}(\xi)}|^2=2^{-n}\exp\sparen{-\frac{|\xi-\xi_{\gamma}|^2}{2|\xi_{\gamma}|}}
\]
so that by Lemma \ref{lemmaone} and Fubini's Theorem
\begin{align*}
&\pi^nC'_1\norm{|\xi|^{m}\widehat{f(\xi)}}_{L^2(\mathbb{R}^n)}^2
\leq \sum\limits_{(i,k)}\int\limits_{\mathbb{R}^n}\int\limits_{\mathbb{R}^n}\int\limits_{\mathbb{R}^n}2^{2km}\psi_{\gamma}(x-u)\overline{\psi_{\gamma}(x'-u)}f(x)f(x')\,du\,dx\,dx'  \\ &\leq \nonumber \pi^nC'_2\norm{|\xi|^{m}\widehat{f(\xi)}}_{L^2(\mathbb{R}^n)}^2
\end{align*}
which is equivalent to 
\begin{align}\label{inequalityAf}
&\pi^nC'_1\norm{f(x)}_{\dot{H}^{m}(\mathbb{R}^n)}^2\leq \sum\limits_{(i,k)} \int\limits_{\mathbb{R}^n}\int\limits_{\mathbb{R}^n}\int\limits_{\mathbb{R}^n}2^{2km}\psi_{\gamma}(x-u)\overline{\psi_{\gamma}(x'-u)}f(x)f(x')\,du\,dx\,dx'
 \\ &\leq \nonumber \pi^nC'_2\norm{f(x)}_{\dot{H}^{m}(\mathbb{R}^n)}^2.
\end{align}
From Lemma \ref{lemmatwo} we also have
\begin{align}\label{approx}
&\left|
\sum\limits_{\gamma}\int\limits_{\mathbb{R}^n}\int\limits_{\mathbb{R}^n}2^{2km}f(x)\phi_{\gamma}(x-\alpha\Delta x_{\gamma})\overline{f(x')}\overline{\phi_{\gamma}(x'-\alpha\Delta x_{\gamma})} \,dx\,dx'\right. - \\ \nonumber &
\left. \sum\limits_{(i,k)}\int\limits_{\mathbb{R}^n}\int\limits_{\mathbb{R}^n}\int\limits_{\mathbb{R}^n}2^{2km}\psi_{\gamma}(x-u)f(x)\overline{\psi_{\gamma}(x'-u)f(x')}\,du\,dx\,dx' \right|  \leq 
\nonumber \frac{\pi^ne^{-1}}{2}\norm{f}_{L^2(\mathbb{R}^n)}^2,
\end{align}
but since $\frac{e^{-1}}{2}<C'_1$ and $C'_2>C'_1$ we can combine inequalities (\ref{inequalityAf}) and (\ref{approx}) to conclude 
\begin{align*}
&C_1\norm{f}_{\dot{H}^{m}(\mathbb{R}^n)}^2 \leq 
\left|
\sum\limits_{\gamma}\int\limits_{\mathbb{R}^n}\int\limits_{\mathbb{R}^n}2^{2km}f(x)\phi_{\gamma}(x-x_{\gamma})\overline{f(x')}\overline{\phi_{\gamma}(x'-x_{\gamma})} \,dx\,dx' \right| \\ 
& \leq \nonumber C_2\norm{f}_{\dot{H}^{m}(\mathbb{R}^n)}^2
\end{align*}
which is the result (\ref{framecondition}).
\end{proof}

\begin{proof}[Proof of Lemma \ref{lemmaone}]

Since $\xi\in\mathbb{R}^n/\{0\}$, we begin by considering $\mathbb{R}^n$ as an infinite union of dyadic annuli, each of which we will cover with real Gaussians which are centered at our choice of $2^k\omega_{i,k}$. In every annulus $2^{k-1}\leq|\xi|< 2^{k}$, for all $k \in \mathbb{N}$, we choose the vectors $2^k\omega_{i,k}$ such that for all $i\neq j$ $|2^k\omega_{i,k}-2^k\omega_{j,k}|> 2^{\frac{k}{2}}$, but that the number of $2^k\omega_{i,k}$ in each annulus is as large as possible.  The index set $I_k$ is finite as the volume of every annulus is finite. 

Fixing $\xi$ for the rest of this proof, $\xi$ must lie in an annulus $2^{k-1}\leq |\xi|<2^k$ for some fixed $k$ in $\mathbb{N}$. As a result of our choice of vectors, for all $\xi\in \mathbb{R}^n$ there exists at least one point $2^k\omega_{i,k}$ for which the inequality $|\xi-2^k\omega_{i,k}|< 2^{\frac{k}{2}}$ holds. This condition gives the existence of a lower bound: 
\[
|\xi|^{2m}e^{-1}\leq \sum\limits_{(i,k)}2^{2km}\exp\sparen{-\frac{|\xi-2^k\omega_{i,k}|^2}{2|2^k\omega_{i,k}|}}.
\]

To show the sum is bounded above, we will consider sets of indices $A,B,C,D,$ and $E$ whose union contains all the indices $(i,k)$ in $\gamma$ and show that the contribution to the sum from each of these sets is bounded by a constant multiple of $|\xi|^{2m}$.  The cases $k=0,1$ are easy, so we consider $k\geq 2$ 

First let $\mathcal{A}$ consist of those indices $(i,q)$ for which $|\xi-2^q\omega_{i,q}|<2^{\frac{k}{2}}$. Clearly $q$ can only be equal to $k-1,k,$ or $k+1$. Fixing $q$ for the moment and setting $r=2^{\frac{q}{2}}$, if we consider a ball $B$ of radius $\frac{r}{2}$ centered at each $2^q\omega_{i,q}$ then for all pairs $(i,q), (j,q)\in \mathcal{A}$ with $i\neq j$, $B(2^q\omega_{i,q},\frac{r}{2})\cap B(2^q\omega_{j,q},\frac{r}{2})=\emptyset$. But by the triangle inequality, all balls of radius $\frac{r}{2}$ with centers that have indices in $\mathcal{A}$ are contained in a ball of radius $\frac{3r}{2}$ around $\xi$. Therefore the total number of balls $N$ is bounded as
\begin{align*}
Vol\sparen{B\sparen{\xi,\frac{3r}{2}}}\geq N Vol\sparen{B\sparen{2^q\omega_{i,q},\frac{r}{2}}}
\end{align*}
which implies $N\leq 3^n$. Since there are only three possible values $q$ can take, the total contribution for the set $\mathcal{A}$ to the sum is bounded by $3^{n+1}.$

For the second set let $\mathcal{B}$ consist of those indices $(i,q)$ for which the inequality $2^{\frac{k}{2}}\leq |\xi-2^{q}\omega_{i,q}|< 2^{k}$ holds and also $|k-q|\leq 1$. We can write $\mathcal{B}$ as a collection of subsets $\mathcal{B}_j$ such that 
\[
\mathcal{B}=\bigcup\limits_{j=2}^{2^{\frac{k}{2}}}\mathcal{B}_j
\]
where $\mathcal{B}_j$ denotes the set of indices for which $(j-1)2^{\frac{k}{2}}\leq|\xi-2^{q}\omega_{i,q}|< j2^{\frac{k}{2}}$. As before, we consider balls of radius $\frac{r}{2}=2^{\frac{q}{2}-1}$ centered at each $2^q\omega_{i,q}$ such that for all pairs $(i,q), (j,q)\in \mathcal{B}_j$ with $i\neq j$, $B(2^q\omega_{i,q},\frac{r}{2})\cap B(2^q\omega_{j,q},\frac{r}{2})=\emptyset$. By the triangle inequality, all balls with centers that have indices in $\mathcal{B}_j$ are contained in an annulus centered about $\xi$ with inner radius $(j-1)r-\frac{r}{2}$ and outer radius $jr+\frac{r}{2}$. The total number of indices for fixed $q$ in each set $\mathcal{B}_j$ is bounded as 
\begin{align*}
Vol\sparen{B\sparen{\xi,jr+\frac{r}{2}}}-Vol\sparen{B\sparen{\xi,(j-1)r-\frac{r}{2}}}\geq NVol\sparen{B\sparen{2^q\omega_{i,q},\frac{r}{2}}}
\end{align*}
which implies
\begin{align*}
N\leq 2^n\sparen{\sparen{j+\frac{1}{2}}^n-\sparen{j-\frac{3}{2}}^n}.
\end{align*}
Since $q$ can take only three possible values, multiplying this last bound by $3$ gives a bound on the total number of indices in each set $B_j$.  
Because of the restriction on the size of $|\xi-2^{q}\omega_{i,q}|$ and the fact that $|q-k|\leq 1$, the inequality
\[
\frac{(j-1)^2}{2}\leq \frac{|\xi-2^q{\omega_{i,q}}|^2}{2|2^q\omega_{i,q}|}\leq 4j^2
\]
holds for each tuple in $\mathcal{B}_j$.
Summing over all of the sets $\mathcal{B}_j$ 
\begin{align*}
&\sum_j\sum\limits_{(i,q)\in\mathcal{B}_j}2^{2km}\exp\sparen{\frac{-|\xi-2^q{\omega_{i,q}}|^2}{|2^q\omega_{i,q}|}}\\&<  \sum\limits_{j=2}^{2^{\frac{k}{2}}} 2^{2km}3(2^n)\sparen{\sparen{j+\frac{1}{2}}^n-\sparen{j-\frac{3}{2}}^n}\exp\sparen{-\frac{(j-1)^2}{2}} \\ &
\leq \sum\limits_{j=2}^{\infty}2^{2km}3(2^n)(j+1)^n\exp\sparen{-\frac{(j-1)^2}{2}}.
\end{align*}
The sum 
\begin{align*}
\sum\limits_{j=2}^{\infty}3(2^n)(j+1)^n\exp\sparen{-\frac{(j-1)^2}{2}}
\end{align*}
is finite; furthermore it is uniformly bounded regardless of the choice of $k$ and hence of $\xi$. Therefore since $2^{k-1}\leq |\xi|\leq 2^{k}$, the total contribution from the set $\mathcal{B}$ is bounded by a constant times $|\xi|^{2m}$.

Next, let $\mathcal{C}$ be the set of indices $(i,q)$ for which $|\xi-2^{q}\omega_{i,q}|> 2^{k}$ holds and also $|k-q|\leq 1$. As before for each fixed $q$, we take balls of radius $\frac{r}{2}=2^{\frac{q}{2}-1}$ centered at each $2^q\omega_{i,q}$ so that for all pairs $(i,q), (j,q)\in \mathcal{C}$ with $i\neq j$, $B(2^q\omega_{i,q},\frac{r}{2})\cap B(2^q\omega_{j,q},\frac{r}{2})=\emptyset$.  All balls with centers that have indices in $\mathcal{C}$ are contained in an annulus centered about the origin with inner radius $r^2-\frac{r}{2}$ and outer radius $r^2+\frac{r}{2}$. Since we have removed a number of the vectors because their indices are in $\mathcal{B}$, the total number of indices, $N$ for fixed $q$ is over-estimated as follows  
\begin{align*}
Vol\sparen{B\sparen{0,r^2+\frac{r}{2}}}-Vol\sparen{B\sparen{0,r^2-\frac{r}{2}}}\geq NVol\sparen{B\sparen{2^q\omega_{i,q},\frac{r}{2}}}
\end{align*}
which implies
\begin{align*}
N\leq 2^n\sparen{\sparen{r+\frac{1}{2}}^n-\sparen{r-\frac{1}{2}}^n}.
\end{align*}
 Since $|\xi-2^{q}\omega_{i,q}|>2^{k}$ for all $(i,q)\in \mathcal{C}$, the inequality 
\[
2^{k-2}<\frac{2^{2k}}{2^{q+1}}\leq \frac{|\xi-2^{q}\omega_{i,q}|^2}{2|2^q\omega_{i,q}|}
\]
holds for each point $2^q\omega_{i,q}$ with indices in $\mathcal{C}$. 
Then the contribution from the set $\mathcal{C}$ is bounded in terms of a sum over $k$ as
\begin{align*}
&\sum\limits_{(i,q)\in\mathcal{C}}2^{2km}\exp\sparen{\frac{-|\xi-2^q{\omega_{i,q}}|^2}{2|2^q\omega_{i,q}|}}\\ &<\sum_{q=k-1}^{k+1}2^{2km}2^n\sparen{\sparen{2^{\frac{q}{2}}+\frac{1}{2}}^n-\sparen{2^{\frac{q}{2}}-\frac{1}{2}}^n}\exp\sparen{-2^{k-2}}. 
\end{align*} 
But since
\begin{align}\label{sumsetc}
&\sum_{q=k-1}^{k+1}2^n\sparen{\sparen{2^{\frac{q}{2}}+\frac{1}{2}}^n-\sparen{2^{\frac{q}{2}}-\frac{1}{2}}^n}\exp\sparen{-2^{k-2}}\\ \nonumber &< 3(2^n)\sparen{\sparen{2^{\frac{k+1}{2}}+\frac{1}{2}}^n-\sparen{2^{\frac{k+1}{2}}-\frac{1}{2}}^n}\exp\sparen{-2^{k-2}},
\end{align} 
and $2^{kn}\exp\sparen{-2^{k-2}}\rightarrow 0$ for all finite $n\in \mathbb{R}$ as $k\rightarrow \infty$, (\ref{sumsetc}) is bounded is independent of $\xi$. So we can conclude that  since $2^{k-1}\leq |\xi|\leq 2^{k}$, the total contribution from the set $\mathcal{C}$ is bounded by a constant times $|\xi|^{2m}$ as well.
 
Now let $\mathcal{D}$ be the set of indices $(i,q)$ for which $q<k-1$. To find the number of vectors in $\mathcal{D}$ for fixed $q$ we again take balls of radius $\frac{r}{2}=2^{\frac{q}{2}-1}$ centered at each $2^q\omega_{i,q}$ so that for all pairs $(i,q), (j,q)\in \mathcal{D}$ with $i\neq j$, $B(2^q\omega_{i,q},\frac{r}{2})\cap B(2^q\omega_{j,q},\frac{r}{2})=\emptyset$. By the triangle inequality, all balls with centers that have indices in $\mathcal{D}$ are contained in an annulus centered about the origin with inner radius $r^2-\frac{r}{2}$ and outer radius $r^2+\frac{r}{2}$. The total number of indices $N$ is bounded as 
\begin{align*}
Vol\sparen{B\sparen{0,r^2+\frac{r}{2}}}-Vol\sparen{B\sparen{0,r^2-\frac{r}{2}}}\geq NVol\sparen{B\sparen{2^q\omega_{i,q},\frac{r}{2}}}
\end{align*}
which gives
\begin{align*}
N\leq 2^n\sparen{\sparen{r+\frac{1}{2}}^n-\sparen{r-\frac{1}{2}}^n}.
\end{align*}
We can conclude there are at most $2^{n}\sparen{\sparen{2^{\frac{q}{2}}+\frac{1}{2}}^n-\sparen{2^{\frac{q}{2}}-\frac{1}{2}}^n}$ vectors for fixed $q$.  Since for these indices $q<k-1$, the inequality
\[
2^{q-1}\leq  \frac{(2^{k-1}-2^{q})^2}{2^{q+1}}\leq \frac{|\xi-2^{q}\omega_{i,q}|^2}{2|2^q\omega_{i,q}|}
\]
holds for each $(i,q)$ in $\mathcal{D}$.
 The total contribution from the set $\mathcal{D}$ is also bounded by a constant times $|\xi|^{2m}$
\begin{align*}
&\sum\limits_{(i,q)\in\mathcal{D}}2^{2km}\exp\sparen{\frac{-|\xi-2^q{\omega_{i,q}}|^2}{2|2^q\omega_{i,q}|}}\\& <\sum_{q=1}^{k-2}2^{2km}2^n\sparen{\sparen{2^{\frac{q}{2}}+\frac{1}{2}}^n-\sparen{2^{\frac{q}{2}}-\frac{1}{2}}^n}\exp\sparen{-2^{q-1}}\\
&<\sum_{q=1}^{\infty}2^{2km}2^n(2^{\frac{q}{2}}+1)^n\exp\sparen{-2^{q-1}}
\end{align*} 
since the sum 
\[
\sum_{q=1}^{\infty}2^n(2^{\frac{q}{2}}+1)^n\exp\sparen{-2^{q-1}}
\]
is uniformly bounded with respect to $k$ . 

The final set $\mathcal{E}$ contributing to the sum consists of the indices $(i,q)$ for which $q>k+1$. Again, as above the total number of vectors $N$ for fixed $q$ is at most 
\[
2^{n}\sparen{\sparen{2^{\frac{q}{2}}+\frac{1}{2}}^n-\sparen{2^{\frac{q}{2}}-\frac{1}{2}}^n}.
\] 
To find the exponential contribution for each $q>k+1$ note that for each $(i,q)\in\mathcal{E}$
\begin{equation*}
2^{q-5}\leq \frac{(2^{q-1}-2^{q-2})^2}{2^{q+1}} \leq \frac{|\xi-2^{q}\omega_{i,q}|^2}{2|2^{q}\omega_{i,q}|}.
\end{equation*}
Therefore
\begin{align*}
\sum\limits_{(i,q)\in\mathcal{E}}2^{2km}\exp\sparen{\frac{-|\xi-2^q{\omega_{i,q}}|^2}{2|2^q\omega_{i,q}|}}<\sum_{q=k+2}^{\infty}2^{2km}2^n\sparen{2^{\frac{q}{2}}+1}^n\exp\sparen{-2^{q-5}}.
\end{align*}
The sum 
\begin{align*}
\sum_{q=k+2}^{\infty}2^n\sparen{2^{\frac{q}{2}}+1}^n\exp\sparen{-2^{q-5}}
\end{align*}
is convergent and bounded independent of $k$ and $q$. Therefore the total contribution from the set $\mathcal{E}$ is bounded by a constant times $|\xi|^{2m}$ as well. This completes the proof of the Lemma. The construction of the approximate partition of unity is similar in idea to the construction of almost orthogonal frames in Meyer's book \emph{Wavelets}. The $\xi_{\gamma}$ which are further away from the variable $\xi$ contribute less to the the partition than those which are close. 

\end{proof}

\begin{proof}[Proof of Lemma \ref{lemmatwo}]

For convenience we let: 
\[
g_{\gamma}(u,x,x')(\Delta_{\gamma}x)^{n}=2^{2km}\phi_{\gamma}(x-u)\overline{\phi_{\gamma}(x'-u)}
\]
which implies that  the operator   
\[
\sum\limits_{\gamma}\sparen{2^{2km}\phi_{\gamma}(x-\alpha\Delta x_{\gamma})\overline{\phi_{\gamma}(x'-\alpha\Delta x_{\gamma})}}
\]
is equal to 
\[
\sum_{(i,k)}\sum\limits_{\alpha\in\mathbb{Z}^n}(g_{\gamma}(\alpha\Delta x_{\gamma},x,x'))(\Delta x_{\gamma})^n.
\]
We will rewrite the sum over $\alpha$ above using the Poisson summation formula. Recall 
\begin{thm}
Poisson Summation Formula: 
Let $a$ be constant, $h(u)\in \mathcal{S}(\mathbb{R}^n)$, and $\alpha,\beta \in \mathbb{Z}^n$ Then the following holds
\[
a^n\sum\limits_{\alpha\in\mathbb{Z}^n}h(a\alpha)=\sum\limits_{\beta\in\mathbb{Z}^n}\hat{h}\sparen{\frac{2\pi\beta}{a}}
\]
\end{thm}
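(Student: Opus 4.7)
The plan is to prove the Poisson Summation Formula by the standard periodization argument. First I would form the $a$-periodic function
\[
F(x) = \sum_{\alpha\in\mathbb{Z}^n} h(x+a\alpha).
\]
Since $h\in\mathcal{S}(\mathbb{R}^n)$, rapid decay of $h$ and of all its derivatives ensures that this series converges absolutely and uniformly on compact sets together with every termwise derivative. Hence $F$ is a smooth function on $\mathbb{R}^n$, periodic of period $a$ in each coordinate, and may be viewed as a smooth function on the torus $(\mathbb{R}/a\mathbb{Z})^n$.

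Next I would expand $F$ as a multiple Fourier series on that torus,
\[
F(x) = \sum_{\beta\in\mathbb{Z}^n} c_\beta \exp\sparen{\tfrac{2\pi i\,\beta\cdot x}{a}},\qquad c_\beta = \frac{1}{a^n}\int_{[0,a]^n} F(x)\exp\sparen{-\tfrac{2\pi i\,\beta\cdot x}{a}}\,dx.
\]
Substituting the definition of $F$ and interchanging sum and integral (justified by the absolute convergence just noted), the translates $[0,a]^n + a\alpha$ tile $\mathbb{R}^n$ without overlap, and the sum of integrals over these translates collapses into a single integral over $\mathbb{R}^n$:
\[
c_\beta = \frac{1}{a^n}\int_{\mathbb{R}^n} h(y)\exp\sparen{-\tfrac{2\pi i\,\beta\cdot y}{a}}\,dy = \frac{1}{a^n}\,\hat{h}\sparen{\tfrac{2\pi\beta}{a}},
\]
using the Fourier transform convention $\hat{h}(\eta)=\int e^{-i\eta\cdot u}h(u)\,du$ fixed earlier in the paper.

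Finally I would evaluate both representations of $F$ at $x=0$. Evaluating the Fourier series at a point is legitimate because $\hat{h}\in\mathcal{S}(\mathbb{R}^n)$ as well, so the coefficients $c_\beta$ decay faster than any polynomial in $|\beta|$ and the series converges absolutely and uniformly. This yields
\[
\sum_{\alpha\in\mathbb{Z}^n} h(a\alpha) = F(0) = \sum_{\beta\in\mathbb{Z}^n} c_\beta = \frac{1}{a^n}\sum_{\beta\in\mathbb{Z}^n}\hat{h}\sparen{\tfrac{2\pi\beta}{a}},
\]
and multiplying through by $a^n$ gives the stated identity. The only places requiring care are the interchange of sum and integral in the computation of $c_\beta$ and the pointwise convergence of the Fourier series at the origin; both reduce directly to the Schwartz decay of $h$ and of $\hat{h}$, so I regard this as a routine verification rather than a genuine obstacle.
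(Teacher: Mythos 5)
Your proof is correct and complete: the periodization $F(x)=\sum_\alpha h(x+a\alpha)$, the unfolding of the Fourier coefficient integral over the fundamental domain, and the evaluation at $x=0$ are all justified exactly as you say by the Schwartz decay of $h$ and $\hat h$, and the normalization matches the paper's convention $\hat h(\eta)=\int e^{-i\eta\cdot u}h(u)\,du$. There is nothing to compare against in the paper itself, which merely recalls the Poisson summation formula as a known theorem (citing it in the course of proving Lemma 2) without supplying a proof; your argument is the standard one that any textbook reference would give. The only implicit hypothesis worth flagging is $a>0$, which is what makes the translates of $[0,a]^n$ tile $\mathbb{R}^n$ and is satisfied in the paper's application where $a=\Delta x_\gamma$.
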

Since by definition 
\[
\hat{g}_{\gamma}(\eta,x,x')=\int\limits_{\mathbb{R}^n} e^{-i\eta\cdot u}g_{\gamma}(u,x,x')\,du,
\]
so we start by computing
\begin{align*}
&g_{\gamma}(u,x,x')\\&=
2^{2km}\sparen{\frac{|\xi_{\gamma}|}{2\pi}}^{n}
\exp\sparen{i(u-x)\cdot\xi_{\gamma}-|\xi_{\gamma}|(u-x)^2-i(u-x')\cdot\xi_{\gamma}-|\xi_{\gamma}|(u-x')^2}\\&=
2^{2km}\sparen{\frac{|\xi_{\gamma}|}{2\pi}}^{n} \exp\sparen{i(x'-x)\cdot\xi_{\gamma}} \exp\sparen{|\xi_{\gamma}|\sparen{-2u^2+2u(x+x')-x^2-x'^2}}\\&=
2^{2km}\sparen{\frac{|\xi_{\gamma}|}{2\pi}}^{n}\exp\sparen{i(x-x')\cdot\xi_{\gamma}} \exp\sparen{-2|\xi_{\gamma}|\sparen{u-\sparen{\frac{x+x'}{2}}}^2}\exp\sparen{|\xi_{\gamma}|\sparen{\frac{-x^2}{2}-\frac{-x'^2}{2}+xx'}}\\ &=
2^{2km}\sparen{\frac{|\xi_{\gamma}|}{2\pi}}^{n}\exp\sparen{i(x-x')\cdot\xi_{\gamma}} \exp\sparen{-2|\xi_{\gamma}|\sparen{u-\sparen{\frac{x+x'}{2}}}^2}\exp\sparen{-\frac{|\xi_{\gamma}|(x-x')^2}{2}}
\end{align*}
which by a standard result on the Fourier transform of a Gaussian (see Appendix 1) gives 
\begin{align}\label{ftransform}
&\hat{g}_{\gamma}(\eta,x,x')\\ &= \nonumber
2^{2km}\sparen{\frac{\pi}{2|\xi_{\gamma}|}}^{\frac{n}{2}}\sparen{\frac{|\xi_{\gamma}|}{2\pi}}^{n}\exp\sparen{i(x-x')\cdot\xi_{\gamma}+i\eta\cdot\sparen{\frac{x+x'}{2}}}\exp\sparen{-\frac{\eta^2}{8|\xi_{\gamma}|}-\frac{|\xi_{\gamma}|(x-x')^2}{2}}.
\end{align}
Now we notice that
\[
\hat{g}_{\gamma}(0,x,x')=\int\limits_{\mathbb{R}^n}g_{\gamma}(u,x,x')\,du
\]
so in applying Poisson summation formula we obtain
\[
\sum\limits_{\alpha\in\mathbb{Z}^n}(g_{\gamma}(\alpha\Delta x_{\gamma},x,x'))(\Delta x_{\gamma})^n=\int\limits_{\mathbb{R}^n} g_{\gamma}(u,x,x')\,du+\sum\limits_{\beta\in\mathbb{Z}^n \\ \beta\neq 0}\hat{g}_{\gamma}\sparen{\frac{2\pi\beta}{\Delta x_{\gamma}},x,x'}
\]
where $\hat{g}_{\gamma}(\eta,x,x')$ is given explicitly by (\ref{ftransform}). From this we can conclude the left hand side of (\ref{approx}) is
\begin{align*}
\left| \int\limits_{\mathbb{R}^n}\int\limits_{\mathbb{R}^n}\sum\limits_{(k,i)}\sum\limits_{\beta\in\mathbb{Z}^n, \beta\neq 0}\hat{g}_{\gamma}\sparen{\frac{2\pi\beta}{\Delta x_{\gamma}},x,x'}f(x)\overline{f(x')}\,dx \,dx'\right|.
\end{align*}
By symmetry of the integrands in $x$ and $x'$, if we use Schur's lemma then the inequality in (\ref{approx}) follows from the estimate:
\begin{equation}\label{cond}
\sup_{x\in\mathbb{R}^n}\int\limits_{\mathbb{R}^n}\sabs{\sum\limits_{(i,k)}\sum\limits_{\beta\in\mathbb{Z}^n, \beta\neq 0}\hat{g}_{\gamma}\sparen{\frac{2\pi\beta}{\Delta x_{\gamma}},x,x'}}\,dx'<\sqrt{\frac{\pi^ne^{-1}}{2}}.
\end{equation}
 If we examine the argument of the integral in (\ref{cond}) we find from equality (\ref{ftransform})
\begin{align*}
&\sabs{\sum\limits_{\beta\in\mathbb{Z}^n, \beta\neq 0}\hat{g}_{\gamma}\sparen{\frac{2\pi\beta}{\Delta x_{\gamma}},x,x'}}\leq \sum\limits_{\beta\in\mathbb{Z}^n, \beta\neq 0}\sabs{\hat{g}_{\gamma}\sparen{\frac{2\pi\beta}{\Delta x_{\gamma}},x,x'}}\\&=
\sum\limits_{\beta\in\mathbb{Z}^n, \beta\neq 0}2^{2k\alpha}\sparen{\frac{|\xi_{\gamma}|}{8\pi}}^{\frac{n}{2}} \exp\sparen{-\frac{(2\pi\beta)^2}{8|\xi_{\gamma}|(\Delta x_{\gamma})^2}-\frac{|\xi_{\gamma}|(x-x')^2}{2}}.
\end{align*}
Integrating both sides of the above inequality with respect to $x'$ gives 
\begin{equation*}
\sup_{x\in\mathbb{R}^n}\int\limits_{\mathbb{R}^n}\sabs{\sum\limits_{\beta\in\mathbb{Z}^n, \beta\neq 0}\hat{g}_{\gamma}\sparen{\frac{2\pi\beta}{\Delta x_{\gamma}},x,x'}}\,dx'\leq\sum\limits_{\beta\in\mathbb{Z}^n, \beta\neq 0}2^{2km}2^{-n}\exp\sparen{-\frac{(2\pi\beta)^2}{8|\xi_{\gamma}|(\Delta x_{\gamma})^2}}.
\end{equation*}
Let $\beta=(\beta_1,\beta_2, . . . ,\beta_n)$ then since with this notation $\beta_i\in\mathbb{Z}$ is indexed independent of $\beta_j\in \mathbb{Z}$ for all $i\neq j$ we have 
\begin{align*}
\sum\limits_{\beta\in\mathbb{Z}^n}\sparen{\prod\limits_{i=1}^n
\exp\sparen{-\frac{(2\pi\beta_i)^2}{8|\xi_{\gamma}|(\Delta x_{\gamma})^2}}}=\prod\limits_{i=1}^{n}\sparen{\sum\limits_{\beta_i\in\mathbb{Z}}
\exp\sparen{-\frac{(2\pi\beta_i)^2}{8|\xi_{\gamma}|(\Delta x_{\gamma})^2}}}.
\end{align*}
As $\beta\neq 0$, at least one of the $\beta_i's$ must also be nonzero. Without loss of generality take $\beta_n\neq 0$ then
\begin{align}\label{products}
&\sum\limits_{\beta\in\mathbb{Z}^n, \beta\neq 0}\exp\sparen{-\frac{(2\pi\beta)^2}{8|\xi_{\gamma}|(\Delta x_{\gamma})^2}}\\ &\leq  \nonumber n\sparen{\prod\limits_{i=1}^{n-1}\sparen{\sum\limits_{\beta_i\in\mathbb{Z}}
\exp\sparen{-\frac{(2\pi\beta_i)^2}{8|\xi_{\gamma}|(\Delta x_{\gamma})^2}}}}
\sparen{\sum\limits_{\beta_n\in \mathbb{Z}, \beta_n\neq0}\exp\sparen{-\frac{(2\pi\beta_n)^2}{8|\xi_{\gamma}|(\Delta x_{\gamma})^2}}}.
\end{align}
To put a bound on this last expression, we now need to pick $\Delta x_{\gamma}$. Letting 
\begin{align*}
a(k)=\frac{(2\pi)^2}{8|\xi_{\gamma}|(\Delta x_{\gamma})^2}
\end{align*}
since $\Delta x_{\gamma}$ is of the form $C_{\epsilon}2^{-\frac{k}{2}-\epsilon k}$ we have that
\begin{align*}
a(k)=\frac{\pi^22^{\epsilon k}}{2C_{\epsilon}}. 
\end{align*}
Now if we pick $C_{\epsilon}<4$ then for all $k$, $a(k)>1$ always. Therefore for any such choice of $C_{\epsilon}$ we have 
\[
\sum\limits_{\beta_i\in\mathbb{Z}}\exp\sparen{-\frac{(\beta_i)^2}{8|\xi_{\gamma}|(\Delta x_{\gamma})^2}}<2\sum\limits_{\beta_i\in\mathbb{N}}\exp\sparen{-a(k)\beta_i}=\frac{2}{1-e^{-a(k)}}<\frac{2}{1-e^{-1}}
\]
which ensures the first product in (\ref{products}) is uniformly bounded independent of $k$: 
\[
 \sparen{\prod\limits_{i=1}^{n-1}\sparen{\sum\limits_{\beta_i\in\mathbb{Z}}
\exp\sparen{-\frac{(2\pi\beta_i)^2}{8|\xi_{\gamma}|(\Delta x_{\gamma})^2}}}}<\sparen{\frac{2}{1-e^{-a(k)}}}^{n-1}<\sparen{\frac{2e}{e-1}}^{n-1}
\]
From Lemma 1, the number of the $2^k\omega_{i,k}$ can be over-estimated by by $2^n(2^{\frac{k}{2}}+1)^n$ per fixed $k$, so combining estimates 
\begin{align*}
&\sum\limits_{(i,k)}\sum\limits_{\beta\in\mathbb{Z}^n, \beta\neq 0}2^{2km}2^{-n}\exp\sparen{-\frac{(2\pi\beta)^2}{8|\xi_{\gamma}|(\Delta x_{\gamma})^2}}\\
<&\sum\limits_{k=1}^{\infty}2^{2km}n(2^{\frac{k}{2}}+1)^n\sparen{\frac{2e}{e-1}}^{n-1}\exp\sparen{-a(k)} 
\end{align*}
Since $\epsilon>0$, $\exp(-a(k))$ dominates any power of $2^k$. Thus as long as $C_{\epsilon}$ is chosen sufficiently small we can make this sum less than $\sqrt{\frac{\pi^ne^{-1}}{2}}$ which concludes the proof of inequality (\ref{cond}).  

\end{proof}

\section{Operator Norm Estimates}

From Theorem $1$ in Section $1$, the operator $P^m_1(f(y))=\{c(\gamma)\}_{\gamma\in\Gamma}$ where
 \[
c(\gamma)=\int_{\mathbb{R}^n}2^{km}\overline{\phi_{\gamma}(x)}f(x)\,dx
\]
is a one-to-one bounded mapping of  $H^m(\mathbb{R}^n)$ into the space of sequences weighted with $2^{km}$ which are convergent in $l^2(\Gamma)$. Let $P^m_2=P_1^{m*}$ be defined as follows 
\[
P^m_2: l^2(\Gamma)\rightarrow L^2(\mathbb{R}^n), \qquad P_2(\{c(\gamma)\})=\sum\limits_{\gamma}2^{km}c(\gamma)\phi_{\gamma}(y)
\]
Now recall that $T$ is an operator of order $m$ if $T$ maps $H^{r}(\mathbb{R}^n)\rightarrow H^{r-m}(\mathbb{R}^n)$. In section $1$, we showed that $\Pi^m=P^m_2\circ P_1^m$ is an operator of order $2m$. Let $I$ denote the identity operator, as there exist constants $C_1'$ and $C_2'$ such that $C_1'I\leq \Pi^0 \leq C_2'I$, in $L^{2}(\mathbb{R}^n)$ norm sense, $P_1^0$ is bounded and invertible on its range. The construction of $P_1^0$ and $P^0_2$ allows us to translate the characterization of functions and operators in $H^m(\mathbb{R}^n)$ to the framework of weighted sequences in $l^2(\Gamma)$. Armed with the frame operators, we will show that when the frame functions are propagated along bicharacteristics for the wave equation, their Sobolev norm is preserved. This will help us also show that the the action of the operator $T(x,t,\partial_x,\partial_t)=\partial^2_t-A(x,t,\partial_x)$ on the parametrix  is order $1$. The estimates established in this section will ultimately be useful in building the parametrix in section 3.  

First we recall that  $T(x,t,\partial_x,\partial_t)=\partial^2_t-A(x,t,\partial_x)$ has principal symbol $p(x,t,\xi,\tau)=\tau^2-\sum\limits_{i,j}a_{i,j}(x,t)\xi_i\xi_j$. The bicharacteristics associated to $p$ are 
\begin{align}\label{origbichar}
&\frac{d t}{ds}=p_{\tau},  \qquad \frac{d x_j}{d s}= p_{\xi_j}, \qquad 
\frac{d\xi_j}{d s}=-p_{x_j}, \qquad \frac{d \tau}{d s}=-p_t.
\end{align}
Setting $q=\sparen{\sum\limits_{i,j}a_{i,j}\xi_i\xi_j}^{\frac{1}{2}}$, we find that $p=(\tau-q)(\tau+q)$. There are two choices for null bicharacteristics. Here we assume that $\tau=q$, so the bicharacteristic equations (\ref{origbichar}) become 
\begin{align}\label{bicharacteristics}
&\frac{d t}{ds}=p_{\tau}=2\tau=2q , \qquad \frac{d x}{d t}=\frac{p_{\xi}}{2q}=-q_{\xi},\\& \nonumber
\frac{d\xi}{d t}=\frac{-p_x}{2q}=q_x, \qquad \frac{d \tau}{d t}=1
\end{align}
Define 
\[
(x_{\gamma}(t,t',x_{\gamma},\xi_{\gamma}),\xi_{\gamma}(t,t',x_{\gamma},\xi_{\gamma}))
\]
as the solution to the system (\ref{bicharacteristics}) at time $t$ with initial conditions 
\[
(x_{\gamma}(t',t',x_{\gamma},\xi_{\gamma}),\xi_{\gamma}(t',t',x_{\gamma},\xi_{\gamma}))=(x_{\gamma},\xi_{\gamma}) 
\]
where $(x_{\gamma},\xi_{\gamma})$ are given in Lemmas \ref{lemmaone} and \ref{lemmatwo} of Section 1. We let $\mathcal{U}(t,t')$ denote the the evolution operator associated to this transformation. Often we will abbreviate 
\[
\mathcal{U}(t,0)(x_{\gamma},\xi_{\gamma})=(x_{\gamma}(t,0,x_{\gamma},\xi_{\gamma}),\xi_{\gamma}(t,0,x_{\gamma},\xi_{\gamma}))
\]
as 
\[
(x_{\gamma}(t),\xi_{\gamma}(t))
\]
and
\[
\mathcal{U}(0,t)(x_{\gamma},\xi_{\gamma})=(x_{\gamma}(0,t,x_{\gamma},\xi_{\gamma}),\xi_{\gamma}(0,t,x_{\gamma},\xi_{\gamma}))
\]
as 
\[
(x_{\gamma}(-t),\xi_{\gamma}(-t)).
\]
Let
\begin{align*}
&\phi_{\gamma}(t,x)=\sparen{\frac{|\xi_{\gamma}(t)|\Delta x_{\gamma}}{2\pi}}^{\frac{n}{2}} \exp\sparen{i\xi(t)\cdot(x-x_{\gamma}(t))-|\xi_{\gamma}(t)||x-x_{\gamma}(t)|^2}.
\end{align*}
Then define $E(t)$ to be the propagation operator acting on $f(x)\in L^2(\mathbb{R}^n)$ as follows:
\begin{align*}
\Pi^0 E(t) \Pi^0 f & = P^0_2 B_{E}(t) P_1^0 f\\&
=\sum\limits_{\gamma,\gamma'}b_{E}(\gamma,\gamma',t)c(\gamma')\phi_{\gamma}(x)
\end{align*}
where
\begin{align*}
b_{E}(\gamma,\gamma',t)=\int_{\mathbb{R}^n}\overline{\phi_{\gamma}(x)}\phi_{\gamma'}(t,x)\,dx
\end{align*}
denotes the entries of the matrix $B_{E}(t)$. Then note that $TE(t)$ is defined by the following equation  
\begin{align*}
\Pi^0 TE(t) \Pi^0 f & = P^0_2 B_{T}(t) P_1^0f \\&
=\sum\limits_{\gamma,\gamma'}b_{T}(\gamma,\gamma',t)c(\gamma')\phi_{\gamma}(x)
\end{align*}
where 
\begin{align*}
b_{T}(\gamma,\gamma',t)=\int_{\mathbb{R}^n}\overline{\phi_{\gamma}(x)}T\phi_{\gamma'}(t,x)\,dx
\end{align*}
denotes the entries of the matrix $B_{T}(t)$. The central theorem of this section is then: 
\begin{thm}\label{boundedtheorem}
$E(t)$ is a bounded operator of order $0$, and $TE(t)$ is a bounded operator of order $1$.
\end{thm}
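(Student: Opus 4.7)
The plan is to reduce the Sobolev-space boundedness of $E(t)$ and $TE(t)$ to matrix boundedness on weighted $\ell^2(\Gamma)$, then to establish almost-orthogonality of the entries $b_E(\gamma,\gamma',t)$ and $b_T(\gamma,\gamma',t)$ via Gaussian overlap estimates, and finally to apply Schur's lemma. By Theorem \ref{theoremone}, the norm $\norm{f}_{\dot H^m}$ is equivalent to the weighted $\ell^2$ norm of $\{2^{km}c(\gamma)\}$, so $E(t)$ is of order $0$ if and only if the matrix $(2^{km}b_E(\gamma,\gamma',t)2^{-k'm})_{\gamma,\gamma'}$ is bounded on $\ell^2$, and $TE(t)$ is of order $1$ if and only if $(2^{k(m-1)}b_T(\gamma,\gamma',t)2^{-k'm})_{\gamma,\gamma'}$ is bounded on $\ell^2$ for one, and hence every, $m$.

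First I would estimate $b_E(\gamma,\gamma',t)$ directly. Writing out the inner product of two complex Gaussians, one centered at $x_\gamma$ with frequency $\xi_\gamma$ and spatial scale $|\xi_\gamma|^{-1/2}$, and the other centered at $x_{\gamma'}(t)$ with frequency $\xi_{\gamma'}(t)$ and scale $|\xi_{\gamma'}(t)|^{-1/2}$, the integral in $x$ can be computed by completing the square. The outcome is Gaussian decay in the phase-space separation
\[
d(\gamma,\gamma',t)^2 \;\sim\; \bigl(|\xi_\gamma|+|\xi_{\gamma'}(t)|\bigr)|x_\gamma-x_{\gamma'}(t)|^2 + \frac{|\xi_\gamma-\xi_{\gamma'}(t)|^2}{|\xi_\gamma|+|\xi_{\gamma'}(t)|},
\]
multiplied by a scale-mismatch factor that is Schwartz in $|k-k'|$. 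Because the bicharacteristic flow is bi-Lipschitz on $[-T,T]$ under the $C^{1,1}$ assumption, $d(\gamma,\gamma',t)$ is comparable to $d(\gamma,\gamma',0)$ up to constants depending on $T$, so the estimate can be transferred to initial phase-space distance.

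Next I would verify Schur's condition $\sup_\gamma\sum_{\gamma'}|2^{(k-k')m}b_E(\gamma,\gamma',t)|\leq C$, and the symmetric version. I would decompose the sum by the relative scale $|k-k'|$ and by dyadic shells of phase-space distance, exactly as in the proof of Lemma \ref{lemmaone}: when $k\approx k'$ the number of $\gamma'$ within distance $R$ grows polynomially in $R$ and is killed by the Gaussian decay; when $|k-k'|$ is large the scale-mismatch factor in $b_E$ dominates the weight $2^{(k-k')m}$. For $b_T(\gamma,\gamma',t)$ the decisive observation is that $T\phi_{\gamma'}(t,x)$ is of pointwise size $2^{k'}$ rather than the naive $2^{2k'}$, because the principal symbol $\tau^2-a_{ij}(x,t)\xi_i(t)\xi_j(t)$ evaluated at $(x_{\gamma'}(t),t,\xi_{\gamma'}(t),\tau_{\gamma'}(t))$ vanishes on the null bicharacteristic (the bicharacteristic equations (\ref{bicharacteristics}) are exactly what ensures the cancellation of the $O(2^{2k'})$ terms produced by $\partial_t^2$ against those produced by $A(x,t,\partial_x)$). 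Combining this $O(2^{k'})$ pointwise bound with the same Gaussian overlap computation as for $b_E$ gives $|b_T|\lesssim 2^{k'}|b_E|$ up to $O(1)$ factors, and the Schur argument for $B_T(t)$ then proceeds as before.

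The main obstacle will be making the cancellation in $T\phi_{\gamma'}(t,x)$ quantitative. The quadratic phase $-|\xi_{\gamma'}(t)||x-x_{\gamma'}(t)|^2$ is a \emph{frozen} rather than a proper Gaussian-beam quadratic phase (which would evolve under a Riccati equation), so the subprincipal symbol does not automatically vanish and one has to estimate by hand. The residual must be controlled pointwise by $2^{k'}\bigl(1+|\xi_{\gamma'}(t)||x-x_{\gamma'}(t)|^2\bigr)^N\exp\bigl(-|\xi_{\gamma'}(t)||x-x_{\gamma'}(t)|^2\bigr)$, which requires the Lipschitz bound on $\nabla_x a_{ij}$ to control the Taylor remainder of the coefficients around $x_{\gamma'}(t)$, together with careful use of the Hamiltonian identities $\dot x_\gamma=-q_\xi$, $\dot\xi_\gamma=q_x$ to cancel the time derivatives of the phase against the spatial ones produced by $A$. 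Once this pointwise bound is in place, the machinery developed for $b_E$ adapts to $b_T$ with an extra factor $2^{k'}$, yielding the order-$1$ bound for $TE(t)$.
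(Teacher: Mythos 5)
Your proposal follows essentially the same route as the paper: reduce to Schur's lemma via Theorem~\ref{theoremone}, establish Gaussian-overlap estimates on $b_E$ and $b_T$, use the bi-Lipschitz property of the bicharacteristic flow (Lemma~\ref{propertyA}) to control the phase-space separation, and exploit the vanishing of the principal symbol $p(x,t,\psi_x,\psi_t)$ on null bicharacteristics (Lemma~\ref{leading}) to gain one order of $|\xi_{\gamma'}(t)|$ in $b_T$. The region decomposition by $|k-k'|$ relative to a threshold $k_0$ is also the same.

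One small point to be careful about: your statement that ``$d(\gamma,\gamma',t)$ is comparable to $d(\gamma,\gamma',0)$'' is slightly too strong as written. In $b_E(\gamma,\gamma',t)$ only $\gamma'$ is propagated, so the relevant quantity is $d\bigl((x_\gamma,\xi_\gamma),\mathcal{U}(t,0)(x_{\gamma'},\xi_{\gamma'})\bigr)$, which is \emph{not} comparable to $d\bigl((x_\gamma,\xi_\gamma),(x_{\gamma'},\xi_{\gamma'})\bigr)$; bi-Lipschitzness of $\mathcal{U}(t,0)$ instead gives comparability with $d\bigl(\mathcal{U}(0,t)(x_\gamma,\xi_\gamma),(x_{\gamma'},\xi_{\gamma'})\bigr)$, i.e.\ one must back-propagate $\gamma$ rather than forget the time evolution. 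This is exactly the content of the paper's Lemma~\ref{propertyB}, and it is what makes the Schur column sum over unpropagated lattice indices $\gamma'$ tractable in the regime $|k-k'|\le k_0$. With that correction your sketch matches the paper's argument.
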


From Section 1, $\Pi^0$ is bounded and invertible and also by Theorem \ref{theoremone} we know the relationship of the frame to the Sobolev norm of $f(x)$. Therefore to prove Theorem 3 by Schur's lemma it suffices to show:
\begin{align}\label{star}
\sum\limits_{\gamma}|b_{E}(\gamma,\gamma',t)|\leq C, \qquad \sum\limits_{\gamma'}|b_{E}(\gamma,\gamma',t)|\leq C 
\end{align}
and
\begin{align}\label{doublestar}
\sum\limits_{\gamma}|b_{T}(\gamma,\gamma',t)|\leq C2^{k'}, \qquad \sum\limits_{\gamma'}|b_{T}(\gamma,\gamma',t)|\leq C2^{k}.
\end{align}
where $C$ denotes a constant independent of $\gamma$ and $\gamma'$. We will also show this constant is uniform for all $t\in [-T,T]$.

We start by examining $T\phi_{\gamma}(t,x)$:
\begin{lem}\label{leading}
\[
T\phi_{\gamma}(t,x)=\sparen{\frac{|\xi_{\gamma}(t)|\Delta x_{\gamma}}{2\pi}}^{\frac{n}{2}}\times \sparen{p(x,t,\psi_x,\psi_t)e^{i\psi}+\mathcal{O}(|\xi_{\gamma}(t)|)e^{i\psi}}
\]
where 
\[
\psi(t,x,x_{\gamma}(t),\xi_{\gamma}(t))=\xi(t)\cdot(x-x_{\gamma}(t))+i|\xi_{\gamma}(t)||x-x_{\gamma}(t)|^2
\]
and
\[
p(t,x,\psi_x,\psi_t)=\mathcal{O}(|\xi_{\gamma}(t)|^2|x-x_{\gamma}(t)|^2).
\]
\end{lem}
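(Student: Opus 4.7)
The plan is to apply the wave operator $T$ directly to $\phi_\gamma=A(t)e^{i\psi}$, with $A(t)=\sparen{|\xi_\gamma(t)|\Delta x_\gamma/(2\pi)}^{n/2}$, via the chain rule; regroup the resulting terms by the number of phase-derivatives each carries; and invoke the Hamilton equations to identify the principal-symbol contribution and its order of vanishing on the bicharacteristic.

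I would compute $\partial_t^2\phi_\gamma$ and $\sum a_{ij}\partial_{x_i}\partial_{x_j}\phi_\gamma$ by the chain rule and observe that the terms quadratic in phase-derivatives assemble (up to a global sign set by the symbol convention) into $p(x,t,\psi_x,\psi_t)\phi_\gamma$. Everything left over---namely $i(\psi_{tt}-\sum a_{ij}\psi_{x_ix_j})$, together with $A^{-1}A'\psi_t$ and $A^{-1}A''$---is pushed into an error term. Since $\psi_x=\xi_\gamma(t)+2i|\xi_\gamma(t)|(x-x_\gamma(t))$, one reads off $\psi_{x_ix_j}=2i|\xi_\gamma(t)|\delta_{ij}$, and the remaining $\psi_{tt}$, $A'/A$, $A''/A$ are polynomials in $(x-x_\gamma(t))$ of degree at most two whose coefficients are bounded by powers of $|\xi_\gamma(t)|$; the bicharacteristic ODEs $\dot x_\gamma=-q_\xi$, $\dot\xi_\gamma=q_x$ together with the $C^{1,1}$ bounds on $a_{ij}$ make this quantitative. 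The polynomial growth in $|x-x_\gamma(t)|$ is absorbed by the Gaussian factor $\exp(-|\xi_\gamma(t)||x-x_\gamma(t)|^2)$ in $e^{i\psi}$, so the error reduces to $\mathcal{O}(|\xi_\gamma(t)|)e^{i\psi}$ as claimed.

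Next I would prove $p(x,t,\psi_x,\psi_t)=\mathcal{O}(|\xi_\gamma(t)|^2|x-x_\gamma(t)|^2)$ by Taylor expanding $F(x):=p(x,t,\psi_x(t,x),\psi_t(t,x))$ about $x=x_\gamma(t)$. The zeroth order vanishes via Euler's identity for the degree-one-homogeneous function $q=\sparen{\sum a_{ij}\xi_i\xi_j}^{1/2}$: at $x=x_\gamma(t)$ one has $\psi_x=\xi_\gamma$ and $\psi_t=-\xi_\gamma\cdot\dot x_\gamma=\xi_\gamma\cdot q_\xi=q$, so $F(x_\gamma)=q^2-q^2=0$. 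For the first-order term I would compute $\partial_{x_k}F=p_{x_k}+\sum_i p_{\xi_i}\psi_{x_ix_k}+p_\tau\psi_{tx_k}$ at $x=x_\gamma(t)$ and split into real and imaginary parts; the real part collapses to $2q(\dot\xi_{\gamma,k}-q_{x_k})$, which vanishes by $\dot\xi_\gamma=q_x$, while the imaginary part collapses to $-4i|\xi_\gamma|q(q_{\xi_k}+\dot x_{\gamma,k})$, which vanishes by $\dot x_\gamma=-q_\xi$. With the first two Taylor coefficients gone, the remainder is second order in $|x-x_\gamma(t)|$, and the prefactor $|\xi_\gamma(t)|^2$ emerges from the quadratic scaling of $p$ in $(\psi_x,\psi_t)$, each of which is of order $|\xi_\gamma(t)|$.

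The main obstacle is the simultaneous cancellation of the real and imaginary parts of $\partial_{x_k}F$: it is the geometric content of the Gaussian beam ansatz and precisely the reason $(x_\gamma,\xi_\gamma)$ must evolve by the bicharacteristic ODEs of $p$ rather than by some arbitrary curve in phase space. Once both cancellations are in hand, assembling the second-order remainder with the $\mathcal{O}(|\xi_\gamma(t)|)$ error completes the proof.
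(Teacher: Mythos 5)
Your argument is correct and arrives at the same key cancellation as the paper, but organizes it differently. Both approaches reduce the lemma to showing $\nabla_x p(t,x,\psi_x,\psi_t)$ vanishes at $x=x_\gamma(t)$ and then invoke the degree-two homogeneity of $p$. Where you diverge from the paper is in how that vanishing is exhibited: the paper divides $\partial_{x_j}p = p_{x_j}+p_{\xi_l}\psi_{x_lx_j}+p_\tau\psi_{tx_j}$ by $2q$, uses the bicharacteristic equations to rewrite the coefficients, and then cancels against the transport identity $\frac{d\xi_j}{dt}=\frac{dx_l}{dt}\psi_{x_lx_j}+\psi_{tx_j}$ obtained by differentiating $\psi_{x_j}(t,x_\gamma(t))=\xi_j(t)$ along the ray; you instead split $\partial_{x_k}F$ into real and imaginary parts and kill each one separately, the real part by $\dot\xi_\gamma = q_x$ and the imaginary part by $\dot x_\gamma = -q_\xi$. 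The two are algebraically equivalent (substituting the Hamilton equations into the paper's transport identity reproduces your component-by-component cancellation), but yours is more elementary and makes the role of each Hamilton equation visible. You also supply steps the paper leaves implicit: the chain-rule decomposition $T\phi_\gamma = A\bigl(-p(\psi_x,\psi_t)e^{i\psi}+\text{lower order}\bigr)$ (and you correctly flag the overall sign set by the symbol convention, which the paper's statement glosses over), the zeroth-order check $F(x_\gamma)=0$ via $\psi_t(x_\gamma)=-\xi_\gamma\cdot\dot x_\gamma=\xi_\gamma\cdot q_\xi=q$ and Euler's identity, and the observation that the polynomial growth of the remainder in $|x-x_\gamma(t)|$ is controlled by the Gaussian in $e^{i\psi}$. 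This makes your write-up more self-contained, at the cost of a slightly longer calculation than the paper's transport-identity shortcut.
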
 
\begin{proof}
As $p(t,x,\psi_x,\psi_t)$ is positive homogeneous of degree two in $|\xi_{\gamma}(t)|$, the desired conclusion will follow if on nul-bicharacteristics $(t,x_{\gamma}(t),\xi_{\gamma}(t))$ we can show that 
\[
\nabla_{x}p(t,x,\psi_x(t,x_{\gamma}(t),\xi_{\gamma}(t)),\psi_t(t,x_{\gamma}(t),\xi_{\gamma}(t)))=0.
\]
Computing $\nabla_{x}p(t,x,\psi_x,\psi_t)$,
\begin{equation}\label{expression}
\frac{\partial}{\partial x_j}p(t,x,\psi_x,\psi_t)=p_{x_j}+p_{\xi_l}\psi_{x_lx_j}+p_{\tau}\psi_{\tau x_j}.
\end{equation}
Dividing (\ref{expression}) by $2q$ and substituting the equations in (\ref{bicharacteristics}) into the right hand side of (\ref{expression}) we obtain
\begin{equation}\label{input}
-\frac{d\xi_j}{dt}+ \frac{dx_l}{dt}\psi_{x_l}\psi_{x_j}+\psi_{t_j}
\end{equation}
As $\psi_{x_j}(t,x_{\gamma}(t),\xi_{\gamma}(t))=\xi_j(t)$, differentiating $\xi_j(t)$ with respect to $t$  we have
\begin{equation}\label{dif}
\frac{d\xi_j}{dt}= \frac{dx_l}{dt}\psi_{x_l}\psi_{x_j}+\psi_{t_j}.
\end{equation}
Substituting (\ref{dif}) into (\ref{input}) implies (\ref{input}) is 0, which happens if and only if (\ref{expression}) vanishes on nul-bicharacteristics. 
\end{proof}
With Lemma \ref{leading} in mind, we consider the entries of the matrices $B_E(t)$ and $B_T(t)$. First we set
\[
\beta^{0}_{\gamma,\gamma'}=\sparen{\frac{|\xi_{\gamma}||\xi_{\gamma'}(t)|\Delta x_{\gamma}\Delta x_{\gamma'}}{(2\pi)^2}}^{\frac{n}{2}}
\]
then 
\begin{align*}
&b_E(\gamma,\gamma',t)\\&= 
\beta_{\gamma,\gamma'}^{0}
\int\limits_{\mathbb{R}^n}\exp\sparen{i(x-x_{\gamma'}(t))\cdot \xi_{\gamma'}(t)-i(x-x_{\gamma})\cdot\xi_{\gamma}-|\xi_{\gamma'}(t)||x-x_{\gamma'}(t)|^2-|\xi_{\gamma}|||x-x_{\gamma}|^2}\,dx
\end{align*}
and to leading order
\begin{align}\label{bT}
&b_{T}(\gamma,\gamma',t)\\&= \nonumber
\beta_{\gamma,\gamma'}^{0}
\int\limits_{\mathbb{R}^n}\exp\sparen{i(x-x_{\gamma'}(t))\cdot \xi_{\gamma'}(t)-i(x-x_{\gamma})\cdot\xi_{\gamma}-|\xi_{\gamma'}(t)||x-x_{\gamma'}(t)|^2-|\xi_{\gamma}||x-x_{\gamma}|^2}\\ \nonumber
 &\times |x-x_{\gamma'}(t)|^2|\xi_{\gamma'}(t)|^2\,dx. 
\end{align}
The first inner product, $b_E(\gamma,\gamma',t)$ is evaluated 
\begin{align*}
&b_E(\gamma,\gamma',t)\\&=
\beta_{\gamma,\gamma'}^{0} 
\int\limits_{\mathbb{R}^n}\exp\sparen{i(x-x_{\gamma'}(t))\cdot \xi_{\gamma'}(t)-i(x-x_{\gamma})\cdot\xi_{\gamma}-|\xi_{\gamma'}(t)||x-x_{\gamma'}(t)|^2-|\xi_{\gamma}|||x-x_{\gamma}|^2}\,dx \\
&=\beta_{\gamma,\gamma'}^{0} \exp\sparen{ix_{\gamma}\cdot\xi_{\gamma}-ix_{\gamma'}(t)\cdot \xi_{\gamma'}(t)}\\ &=
\int\limits_{\mathbb{R}^n}\exp\sparen{ix\cdot(\xi_{\gamma'}(t)-\xi_{\gamma})} \exp\sparen{-\sparen{|\xi_{\gamma}|+|\xi_{\gamma'}(t)|}\sabs{x-\frac{|\xi_{\gamma}|x_{\gamma}+|\xi_{\gamma'}(t)|x_{\gamma'}(t)}{|\xi_{\gamma}|+|\xi_{\gamma'}|}}^2}\\ &\times 
 \exp\sparen{-\frac{|\xi_{\gamma'}(t)||\xi_{\gamma}|}{|\xi_{\gamma}|+|\xi_{\gamma'}(t)|}{|x_{\gamma'}(t)-x_{\gamma}|^2}}\,dx.
\end{align*}
Making the change of variable
\begin{align}\label{sub}
y=x-\frac{|\xi_{\gamma}|x_{\gamma}+|\xi_{\gamma'}(t)|x_{\gamma'}(t)}{|\xi_{\gamma}|+|\xi_{\gamma'}(t)|},
\end{align}
$b_E(\gamma,\gamma',t)$ takes the form of the Fourier transform of a Gaussian integral which one can evaluate (see Appendix 1) to obtain:  
\begin{align*}
&\beta_{\gamma,\gamma'}\exp\sparen{i\sparen{x_{\gamma}\cdot\xi_{\gamma}-x_{\gamma'}(t)\cdot\xi_{\gamma'}(t)+\frac{|\xi_{\gamma}|x_{\gamma}+|\xi_{\gamma'}(t)|x_{\gamma'}(t)}{|\xi_{\gamma}|+|\xi_{\gamma'}(t)|}\cdot(\xi_{\gamma'}(t)-\xi_{\gamma})}} \\ \nonumber
&\times  \exp\sparen{-\frac{\sabs{\xi_{\gamma'}(t)-\xi_{\gamma}}^2}{4(|\xi_{\gamma}|+|\xi_{\gamma'}(t)|)}} \exp\sparen{-\frac{|\xi_{\gamma'}(t)||\xi_{\gamma}|}{|\xi_{\gamma}|+|\xi_{\gamma'}(t)|}\sabs {x_{\gamma'}(t)-x_{\gamma}}^2} 
\end{align*}
where now
\begin{align}
\beta_{\gamma,\gamma'}=\sparen{\frac{|\xi_{\gamma}||\xi_{\gamma'}(t)|\Delta x_{\gamma}\Delta x_{\gamma'}}{4\pi(|\xi_{\gamma'}(t)|+|\xi_{\gamma}|)}}^{\frac{n}{2}}.
\end{align}
so that
\begin{align}\label{bebound}
&|b_E(\gamma,\gamma',t)| \\ \nonumber
&\leq \beta_{\gamma,\gamma'}
\times  \exp\sparen{-\frac{\sabs{\xi_{\gamma'}(t)-\xi_{\gamma}}^2}{4(|\xi_{\gamma}|+|\xi_{\gamma'}(t)|)}} \exp\sparen{-\frac{|\xi_{\gamma'}(t)||\xi_{\gamma}|}{|\xi_{\gamma}|+|\xi_{\gamma'}(t)|}\sabs {x_{\gamma'}(t)-x_{\gamma}}^2}. 
\end{align}

For the integral $b_T(\gamma,\gamma',t)$ we make the same substitution (\ref{sub}) into (\ref{bT}). Then we set 
\[
\eta=\xi_{\gamma'}(t)-\xi_{\gamma},\qquad c=|\xi_{\gamma}|+|\xi_{\gamma'}(t)|
\]
and 
\[
b=\frac{|\xi_{\gamma}|(x_{\gamma}-x_{\gamma'}(t))}{|\xi_{\gamma}|+|\xi_{\gamma'}(t)|}
\]
so that  we can apply the estimates in Appendix 1 which gives $|b_T(\gamma,\gamma',t)|$ is equal  
\begin{align*}
\beta_{\gamma,\gamma'}|\xi_{\gamma'}|^2\exp\sparen{-\frac{\eta^2}{4c}}\sabs{-\frac{\eta^2}{4c^2}+\frac{ib\eta}{c}+b^2+\frac{1}{2c}}.
\end{align*}
Applying Cauchy-Schwartz and back substituting values for $\eta, c,$ and $b$ so that for $C$ a constant independent of $\gamma,\gamma'$
\begin{align*}
&|b_T(\gamma,\gamma',t)|\leq  C\beta_{\gamma,\gamma'}
\sparen{\frac{|\xi_{\gamma'}(t)|^2|\xi_{\gamma}-\xi_{\gamma'}(t)|^2}{(|\xi_{\gamma'}(t)|+|\xi_{\gamma}|)^2} + \frac{|\xi_{\gamma'}(t)|^2|\xi_{\gamma}|^2|x_{\gamma}-x_{\gamma'}(t)|^2}{(|\xi_{\gamma}|+|\xi_{\gamma'}(t)|)^2}} \\
&\times 
\exp\sparen{-\frac{\sabs{\xi_{\gamma'}(t)-\xi_{\gamma}}^2}{4(|\xi_{\gamma}|+|\xi_{\gamma'}(t)|)}}
\exp\sparen{-\frac{|\xi_{\gamma'}(t)||\xi_{\gamma}|}{|\xi_{\gamma}|+|\xi_{\gamma'}(t)|}
\sabs{x_{\gamma'}(t)-x_{\gamma}}^2} 
\\
&\leq 
C\beta_{\gamma,\gamma'}
\sparen{|\xi_{\gamma}-\xi_{\gamma'}(t)|^2+ |\xi_{\gamma'}(t)||\xi_{\gamma}||x_{\gamma}-x_{\gamma'}(t)|^2}\\
&\times 
\exp\sparen{-\frac{\sabs{\xi_{\gamma'}(t)-\xi_{\gamma}}^2}{4(|\xi_{\gamma}|+|\xi_{\gamma'}(t)|)}}
\exp\sparen{-\frac{|\xi_{\gamma'}(t)||\xi_{\gamma}|}{|\xi_{\gamma}|+|\xi_{\gamma'}(t)|}
\sabs{x_{\gamma'}(t)-x_{\gamma}}^2} 
\end{align*} 

The next two Lemmas characterize properties of the evolution operator $U(t,t')$ acting on the lattice which will assist us in obtaining the bounds (\ref{star}) and (\ref{doublestar}).
\begin{lem}\label{propertyA}
As in the introduction we let 
\[
A(x,t)=\{a_{ij}(x,t)\}_{1\leq i,j\leq n}
\]
be a real symmetric $n\times n$ matrix with entries $a_{ij}(x,t)$ in $C^{1,1}$. For the rest of this lemma $C>0$ denotes a constant which is independent of the essential variables.  Furthermore, as before $A(t,x)$ is bounded and positive definite  and we let
 \[
 q(x,t,\xi)=\sparen{\sum\limits_{i,j}a_{ij}(x,t)\xi_i\xi_j}^{\frac{1}{2}}
 \]
If we consider the system 
 \begin{equation}\label{system}
 \frac{dx}{dt}=-q_{\xi} \qquad \frac{d\xi}{dt}=q_x
 \end{equation}
 with initial conditions $|x(0)|<R$ and $\frac{1}{a}<|\xi(0)|<a$ for some finite $a,R>0$ then solutions to the system (\ref{system}) satisfy the following two conditions. 
 \begin{enumerate}
\item
$ |x(t)-x(0)|<C\sqrt{n}|T|$ 
\item
For all finite $T>0$ there exists a constant $C(T,a)$ such that
\[
\frac{1}{C(T,a)}<|\omega(t)|<C(T,a)
\]
whenever $|t|<T$
\end{enumerate}
\end{lem}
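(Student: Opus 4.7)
The plan is to exploit two structural features of $q(x,t,\xi)=(\sum a_{ij}(x,t)\xi_i\xi_j)^{1/2}$: first, that $q$ is positively homogeneous of degree one in $\xi$ (so $q_\xi$ is degree zero and $q_x$ is degree one), and second, that uniform positive definiteness of $A$ plus $C^{1,1}$ regularity give pointwise upper bounds on $q_\xi$ and $\nabla_x q$ in terms of $|\xi|$ alone. Both conclusions then reduce to direct integration along the flow, with a Gr\"onwall step for the second.

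For statement (1), I would compute
\[
\frac{\partial q}{\partial \xi_k}=\frac{\sum_j a_{kj}(x,t)\xi_j}{q(x,t,\xi)}.
\]
Using $|\sum_j a_{kj}\xi_j|\le C|\xi|$ (entrywise boundedness of $A$) and the lower bound $q\ge |\xi|/\sqrt{C}$ coming from uniform positive definiteness, each component $|q_{\xi_k}|$ is bounded by a constant independent of $\xi$, whence $|q_\xi|\le C\sqrt{n}$. Since $dx/dt=-q_\xi$, integrating on $[0,t]$ with $|t|\le T$ gives the bound $|x(t)-x(0)|\le C\sqrt{n}\,|T|$ directly.

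For statement (2), I would differentiate $|\xi(t)|^2$ along the flow. Because $q_x(x,t,\xi)$ is homogeneous of degree one in $\xi$ and involves $\nabla_x a_{ij}$, which is uniformly bounded by the $C^{1,1}$ hypothesis, one obtains $|q_x|\le C|\xi|$ and hence
\[
\Bigl|\frac{d}{dt}|\xi(t)|^2\Bigr|=\bigl|2\xi(t)\cdot q_x(x(t),t,\xi(t))\bigr|\le 2C|\xi(t)|^2.
\]
Gr\"onwall's inequality then yields $|\xi(0)|e^{-CT}\le|\xi(t)|\le|\xi(0)|e^{CT}$, and combining with $1/a<|\xi(0)|<a$ gives $1/C(T,a)<|\xi(t)|<C(T,a)$ with $C(T,a):=a\,e^{CT}$ (interpreting the $\omega(t)$ in the statement as $\xi(t)$, consistent with the bicharacteristic variable used throughout).

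The main technical point is to make sure the constants in both parts really are independent of the initial data $(x(0),\xi(0))$ inside the stated box and do not degenerate as $|\xi(0)|\to 0$; this is ensured by the homogeneity, which lets the $|\xi|$'s cancel from the ratios defining $q_\xi$ and $q_x/|\xi|$. Everything else is bookkeeping: positive definiteness gives the lower bound on $q$ needed to divide by it, and $C^{1,1}$ gives the uniform bound on $\nabla_x a_{ij}$ needed for the Gr\"onwall estimate. No issues of existence for the ODE arise because the vector field $(-q_\xi,q_x)$ is Lipschitz in $(x,\xi)$ on any region where $|\xi|$ stays bounded away from $0$ and $\infty$, and the a priori bound from step two confirms that the solution remains in such a region on $[-T,T]$.
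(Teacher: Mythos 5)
Your proposal is correct and follows essentially the same route as the paper: bound $q_\xi$ using homogeneity of degree zero and integrate for part (1), bound $q_x$ using homogeneity of degree one and apply Gr\"onwall for part (2). The only cosmetic difference is that you differentiate $|\xi(t)|^2$ whereas the paper works directly with $|d\xi/dt|$, and you correctly note that the $\omega(t)$ in the statement should be read as $\xi(t)$, which the paper's proof also implicitly does.
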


\begin{proof} We prove condition (1) first and then condition (2).
\begin{enumerate}
\item 
Computing $q_{\xi_i}$
\begin{equation}\label{one}
\frac{\partial}{\partial\xi_i}\sparen{\sum\limits_{ij}a_{ij}(x,t)\xi_i\xi_j}^{\frac{1}{2}}=\frac{\sum\limits_{j}a_{ij}(x,t)\xi_j}{\sparen{\sum\limits_{i,j}a_{ij}(x,t)\xi_i\xi_j}^{\frac{1}{2}}}<C
\end{equation}
since the expression in the middle of (\ref{one}) is homogeneous of degree $0$ and the numerator and denominator are both bounded above and below on $|\xi|=1$. From (\ref{system}) we then have 
\[
\sabs{\frac{dx_i}{dt}}<C 
\]
which implies
\[
\sabs{\frac{dx}{dt}}<C\sqrt{n}.
\]
Integrating this inequality gives (1).
\item
Differentiating $q$ with respect to $x$ we have,
\begin{equation}\label{two}
\sabs{\frac{d\xi}{dt}}=\sabs{\frac{\sum\limits_{ij}(a_{ij})_x(x,t)\xi_i\xi_j}{2\sparen{\sum\limits_{i,j}a_{i,j}(x,t)\xi_i\xi_j}^{\frac{1}{2}}}}<C|\xi|
\end{equation}
for some C independent of $\xi$, $x$, and $t$, since the expression in the middle of (\ref{two}) is homogeneous of degree $1$ and the numerator and denominator are both bounded above and below on $|\xi|=1$
Using Gronwall's inequality  gives 
\begin{equation}\label{bounds}
\frac{1}{C(T,a)}<|\xi(0)|\exp(-Ct)<|\xi(t)|<|\xi(0)|\exp(Ct)<C(T,a)
\end{equation}
This results in the desired conclusion for finite $T$, that is for all $t\in [-T,T]$ if $\frac{1}{a}<|\xi(0)|<a$ then there exists $C(T,a)$ such that condition (\ref{bounds}) holds. 
\end{enumerate}
\end{proof}

Recall that $U(t,t')$ is the evolution operator associated to (\ref{system}) and
\[
U(t,t')(x_{\gamma},\xi_{\gamma})=(x_{\gamma}(t,t',x_{\gamma},\xi_{\gamma}),\xi_{\gamma}(t,t',x_{\gamma},\xi_{\gamma})).
\]
By homogeneity, if $c$ is a constant then the above equation scales as follows 
\begin{align}\label{scaling}
(x_{\gamma}(t,t',x_{\gamma},\xi_{\gamma}),c\xi_{\gamma}(t,t',x_{\gamma},\xi_{\gamma}))=
(x_{\gamma}(t,t',x_{\gamma},c\xi_{\gamma}),\xi_{\gamma}(t,t',x_{\gamma},c\xi_{\gamma})).
\end{align}
Since $\xi_{\gamma}=2^k\omega_{i,k}$ with $\frac{1}{2}\leq |\omega_{i,k}|<1$ the relationship (\ref{scaling}) with $c=2^{-k}$ gives that the pair $(x_{\gamma},\omega_{\gamma})$ lies in a compact subset of $\mathbb{R}^n\times(\mathbb{R}^n/\{0\})$ whenever $|x_{\gamma}|<R$ with $R$ a constant independent of $\gamma$. We note that Lemma \ref{propertyA} then applies to $(x_{\gamma},\omega_{\gamma})$ so we that have a bound on the size of $x_{\gamma}(t)$ and $\xi_{\gamma}=2^k\omega_{i,k}(t)$ in terms of the initial data. 

Because our frame is similar to an almost orthogonal frame in type, it makes sense that the pairs of initial data which are close together in frequency contribute the most to the absolute value of the inner products in the sums in (\ref{star}) and (\ref{doublestar}). However, we have an extra variable $\alpha$ since we have a non-compactly supported set of frame functions. Therefore we will use the term 'close in frequency' to mean that the pairs $(x_{\gamma},\xi_{\gamma})$ and $(x_{\gamma'},\xi_{\gamma'})$ from Section 1 satisfy not only the condition  $|x_{\gamma}|, |x_{\gamma'}|<R$ but also that $|k-k'|\leq k_0$, where $k_0$ is a finite constant independent of $\gamma,\gamma'$.  We will show that close pairs of lattice variables have an extra property (Lemma \ref{propertyB}) beyond that of Lemma \ref{propertyA} which makes it possible to compute the bounds on (\ref{star}) and (\ref{doublestar}). 

First we see by equation (\ref{scaling}), for all such close pairs with $c=2^{-k'}$ (where here without loss of generality we have taken $k'\leq k$) the corresponding scaled pairs $(x_{\gamma},2^{k-k'}\omega_{\gamma})$ and $(x_{\gamma'},\omega_{\gamma'})$ lie in the same compact subset $[-R,R]^n\times [\frac{1}{2},2^{k_0}]^n$ of $\mathbb{R}^n\times(\mathbb{R}^n/\{0\})$.  Thus we can conclude from Lemma \ref{propertyA} that the transformation $\mathcal{U}(t,0)$ is invertible and Lipschitz with uniform Lipschitz constant when acting on $(x_{\gamma},2^{k-k'}\omega_{\gamma})$ and $(x_{\gamma'},\omega_{\gamma'})$.  In other words, for all close pairs and for all $t\in[-T,T]$ with $T$ fixed and finite there exists nonzero constants $D_1$ and $D_2$ independent of $\gamma,\gamma'$ with
\begin{align}\label{Lip}
& D_1d^2((x_{\gamma}(t),2^{k-k'}\omega_{\gamma}(t)); (x_{\gamma'}(t),\omega_{\gamma'}(t))) 
\leq d^2((x_{\gamma},2^{k-k'}\omega_{\gamma});(x_{\gamma'},\omega_{\gamma'})) \\ & \leq \nonumber D_2d^2((x_{\gamma}(t),2^{k-k'}\omega_{\gamma}(t)); (x_{\gamma'}(t),\omega_{\gamma'}(t)))
\end{align}
where $d$ denotes the usual Euclidean distance. For convenience, we will abbreviate this type of equivalence relationship where the left hand side is bounded above and below by multiples of the right hand side by $\sim$, so that inequality (\ref{Lip}) can be rewritten as
\begin{align*}
d^2((x_{\gamma},2^{k-k'}\omega_{\gamma});(x_{\gamma'},\omega_{\gamma'}))\sim d^2((x_{\gamma}(t),2^{k-k'}\omega_{\gamma}(t)); (x_{\gamma'}(t),\omega_{\gamma'}(t))).
\end{align*}

The inequality (\ref{Lip}) allows us to obtain another, similar relationship which is the crucial relationship in the computations to obtain bounds on the action of the matrices $B_E(t)$ and $B_T(t)$.  
\begin{lem}\label{propertyB}
For pairs $(x_{\gamma},\xi_{\gamma})$ and $(x_{\gamma'},\xi_{\gamma'})$ such that $|x_{\gamma}|, |x_{\gamma'}|<R$ where $0<R<\infty$ and $|k-k'|\leq k_0$ with $R$ and $k_0$ independent of $\gamma$ and $\gamma'$ the following holds
\[
d^2(\mathcal{U}(t,0)(x_{\gamma'},\omega_{\gamma'}); (x_{\gamma},2^{k-k'}\omega_{\gamma}))\sim d^2(\mathcal{U}(0,t)(x_{\gamma},2^{k-k'}\omega_{\gamma});(x_{\gamma'},\omega_{\gamma'}))
\]
\end{lem}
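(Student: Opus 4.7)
The plan is to reduce the claimed equivalence to a single bi-Lipschitz statement about $\mathcal{U}(t,0)$. Set $A := (x_\gamma, 2^{k-k'}\omega_\gamma)$, $B := (x_{\gamma'}, \omega_{\gamma'})$, and $B^* := \mathcal{U}(0,t)A$. Since $\mathcal{U}(0,t)$ and $\mathcal{U}(t,0)$ are inverse maps, we have $\mathcal{U}(t,0)B^* = A$, so the right-hand side of the lemma equals $d^2(B, B^*)$ while the left-hand side equals $d^2(\mathcal{U}(t,0)B, \mathcal{U}(t,0)B^*)$. Thus it suffices to prove that $\mathcal{U}(t,0)$ is bi-Lipschitz on some compact subset containing both $B$ and $B^*$, with constants uniform in $\gamma, \gamma'$ and $|t| \leq T$.

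First I would verify that $B$ and $B^*$ lie in a common compact subset of $\mathbb{R}^n \times (\mathbb{R}^n \setminus \{0\})$ whose size is independent of $\gamma$ and $\gamma'$. Under the hypotheses $|x_\gamma|, |x_{\gamma'}| < R$ and $|k - k'| \leq k_0$, the point $A$ lies in the compact set $\overline{B(0,R)} \times \sparen{\overline{B(0, 2^{k_0})} \setminus B(0, 2^{-1})}$ and $B$ lies in $\overline{B(0,R)} \times \sparen{\overline{B(0,1)} \setminus B(0, 2^{-1})}$. To control $B^*$, I would apply Lemma \ref{propertyA} to the initial datum $A$ flowed backward over $[-T, T]$: condition (1) bounds the spatial component of $B^*$ by $R + C\sqrt{n}\,T$, and condition (2) bounds its frequency component above and below by constants depending only on $T$ and $k_0$. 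This places both $B$ and $B^*$ inside a common compact set $K \subset \mathbb{R}^n \times (\mathbb{R}^n \setminus \{0\})$ that is independent of $\gamma$ and $\gamma'$.

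Next I would establish that $\mathcal{U}(t,0)$ is bi-Lipschitz on $K$ with constants uniform in $t \in [-T, T]$, which is essentially the content of inequality (\ref{Lip}) extended from the two specific points appearing there to all of $K$. Because the driving vector field $(-q_\xi, q_x)$ has bounded Jacobian on $K$ (this follows from the $C^{1,1}$ hypothesis on the coefficients, the uniform positive-definiteness of $A(t,x)$, and the frequency lower bound on $K$), Gronwall's inequality applied to the variational equation along any two trajectories originating in $K$ yields constants $D_1, D_2 > 0$ with
\[
D_1 \, d^2(\mathcal{U}(t,0) P, \mathcal{U}(t,0) Q) \leq d^2(P, Q) \leq D_2 \, d^2(\mathcal{U}(t,0) P, \mathcal{U}(t,0) Q)
\]
for all $P, Q \in K$ and $|t| \leq T$. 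Specializing to $P = B$, $Q = B^*$ and using $\mathcal{U}(t,0) B^* = A$ gives
\[
d^2(\mathcal{U}(t,0) B, A) \sim d^2(B, \mathcal{U}(0,t) A),
\]
which is precisely the lemma.

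The main obstacle is uniformity: the $\sim$ relation must hold with constants $D_1, D_2$ independent of $\gamma$ and $\gamma'$. This is exactly what forces the introduction of the common compact set $K$ in the first step, and also why the hypothesis $|k - k'| \leq k_0$ is essential — without it the frequency coordinate of $A$ could live in an arbitrarily large range, destroying uniformity of the bi-Lipschitz constants. Once $K$ is fixed and the flow is shown bi-Lipschitz on $K$ with constants depending only on $T$, $R$, $k_0$, and the coefficient bounds, the remainder is bookkeeping about which point plays which role under $\mathcal{U}(t,0)$ and its inverse.
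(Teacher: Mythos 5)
Your proof is correct and takes essentially the same route as the paper's: insert $\mathcal{U}(t,0)\circ\mathcal{U}(0,t)=I$ so that both arguments of the squared distance become images under the flow of a common pair of points, then invoke the bi-Lipschitz inequality (\ref{Lip}). You additionally spell out that $B^*=\mathcal{U}(0,t)(x_{\gamma},2^{k-k'}\omega_{\gamma})$ stays in a compact set controlled by Lemma \ref{propertyA} uniformly in $\gamma,\gamma'$ and $t\in[-T,T]$, a uniformity point the paper's two-line proof leaves implicit but which is needed to apply (\ref{Lip}) with constants independent of the indices.
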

\begin{proof} Since $\mathcal{U}(t,0)\circ \mathcal{U}(0,t)=I$, the right hand side of the relationship, 
\[
d^2(\mathcal{U}(0,t)(x_{\gamma},2^{k-k'}\omega_{\gamma});(x_{\gamma'},\omega_{\gamma'}))
\]
can be expressed as
\[
d^2(\mathcal{U}(0,t)(x_{\gamma},2^{k-k'}\omega_{\gamma});\mathcal{U}(0,t)\mathcal{U}(t,0)(x_{\gamma'},\omega_{\gamma'}))
\]
and from estimate (\ref{Lip}) we obtain the desired conclusion.
\end{proof}

With these Lemmas, we can now calculate a bound on
\begin{equation}\label{bigE}
\sum\limits_{\gamma'}|b_E(\gamma,\gamma',t)|
\end{equation}
for fixed $\gamma$.
We break this sum into three pieces: in region 1. $\gamma': k'<k-k_0$, in region 2, $\gamma':|k'-k|\leq  k_0$ and in region 3, $\gamma': k'>k+k_0$ where for all $t\in [-T,T], T<\infty$, $k_0=\max\{2\log_2C(T,a),1\}$. For the rest of this argument let $D>0$ denote a constant which is independent of $k'$ and $k$ and is uniform for all $t\in [-T,T]$. 

We will apply Lemma \ref{propertyA} in each region to subsets of the initial data $(x_{\gamma'},\omega_{\gamma'})$ as outlined earlier.  Here we must cutoff the $x_{\gamma'}$'s so that $|x_{\gamma'}|<R$, for some large positive $R$. This corresponds to having the initial data with support living in a ball of radius $R$. 


Again because of the similarity of the frame to an almost orthogonal frame, in regions 1 and 3  from Lemma \ref{propertyA} the exponential term from the bound on each inner product will dominate the sum, but in region 2 the argument is more subtle. In each case, formula (\ref{scaling}) and Lemma \ref{propertyA} imply that: 
\begin{align}\label{factor}
&\beta_{\gamma,\gamma'}=\sparen{\frac{|\xi_{\gamma}||\xi_{\gamma'}(t)|\Delta x_{\gamma}\Delta x_{\gamma'}}{4\pi(|\xi_{\gamma}|+|\xi_{\gamma'}|)}}^{\frac{n}{2}}=
\sparen{\frac{C_{\epsilon}^22^{\frac{k}{2}-\epsilon k}2^{\frac{k'}{2}-\epsilon k'}|\omega_{\gamma}||\omega_{\gamma'}(t)|}{4\pi(2^k|\omega_{\gamma}|+2^{k'}|\omega_{\gamma'}(t)|)}}^{\frac{n}{2}} \\ &\leq \nonumber
 \sparen{\frac{C^2_{\epsilon}2^{\frac{k}{2}-\epsilon k}2^{\frac{k'}{2}-\epsilon k'}C(T,a)}{4\pi(2^{k-1}+\frac{2^{k'}}{C(T,a)})}}^{\frac{n}{2}}
 \end{align}
The right hand side of  (\ref{bebound}) contains a product of two exponentials with arguments
\begin{align}\label{expxi}
-\frac{|2^k\omega_{\gamma}-2^{k'}\omega_{\gamma'}(t)|^2}{4(2^k|\omega_{\gamma}|+2^{k'}|\omega_{\gamma'}(t)|)}
\end{align}
and
\begin{align}\label{expx}
-\frac{|\xi_{\gamma}||\xi_{\gamma'}(t)|}{|\xi_{\gamma}|+|\xi_{\gamma'}(t)|}|x_{\gamma}-x_{\gamma'}(t)|^2
\end{align}

In region 1, Lemma \ref{propertyA} implies a lower bound on (\ref{factor})
\begin{align*}
&\frac{|2^k\omega_{\gamma}-2^{k'}\omega_{\gamma'}(t)|^2}{4(2^k|\omega_{\gamma}|+2^{k'}|\omega_{\gamma'}(t)|)}>
\frac{\sparen{2^{k-1}-2^{k'}C(T,a)}^2}{4(2^k+2^{k'}C(T,a))}\\&>
\frac{2^{k-5}\sparen{1-2^{k'-k+1}C(T,a)}^2}{(1+2^{k'-k}C(T,a))}>
\frac{2^{k-5}\sparen{1-\frac{2}{C(T,a)}}^2}{\sparen{1+\frac{1}{C(T,a)}}}=2^{k}D.
\end{align*}
For (\ref{expx}), we only know that 
\[
|x_{\gamma}-x_{\gamma'}(t)|^2\geq 0
\]
which gives 
\[
\exp\sparen{-|x_{\gamma}-x_{\gamma'}(t)|^2}\leq 1.
\]
Since by assumption $|x_{\gamma'}|<R$ and $x_{\gamma'}=\Delta x_{\gamma'}\alpha'=C_{\epsilon}2^{-\frac{k'}{2}-\epsilon k}\alpha'$ for fixed $k'$, by scaling we have $|\alpha'|<RC_{\epsilon}2^{\frac{k'}{2}+\epsilon k'}$. Bounding the number of points in $\mathbb{Z}^n$ in this ball by $D(2^{\frac{k'}{2}+\epsilon k'})^{n}$, we obtain a bound on the number of $x_{\gamma'}$ for fixed $(i,k')$. While the position of the $x_{\gamma'}$ may change their total number does not change when they are propagated. From Lemma \ref{lemmaone} there are $\mathcal{O}(2^{\frac{k'n}{2}})$ vectors $\omega_{i,k'}$ in each annulus indexed by $k'$.  In region 1, from (\ref{factor}) we find since $k'<k-k_0$
\begin{align*}
&\beta_{\gamma,\gamma'}\leq 
 \sparen{\frac{C_{\epsilon}^22^{\frac{k}{2}-\epsilon k}2^{\frac{k'}{2}-\epsilon k'}C(T,a)}{4\pi(2^{k-1}+\frac{2^{k'}}{C(T,a)})}}^{\frac{n}{2}}\leq D\sparen{2^{-\frac{k}{2}-\epsilon k}2^{\frac{k'}{2}-\epsilon k'}}^{\frac{n}{2}}
\end{align*}

Combining estimates
\begin{align}\label{sumregionone}
&\sum\limits_{\gamma': k'<k-k_0}|b_E(\gamma,\gamma',t)|=\mathcal{O}\sparen{\sum\limits_{\substack{(i,k')\\ k'<k-k_0}}2^{-\frac{nk}{4}-\frac{\epsilon nk}{2}}2^{\frac{3nk'}{4}+\frac{\epsilon nk'}{2}}\exp\sparen{-D2^{k}}}\\=&  \nonumber
\mathcal{O}\sparen{\sum\limits_{k'<k-k_0}2^{nk'}\exp\sparen{-D2^k}}= \nonumber\mathcal{O}\sparen{2^{nk}\exp\sparen{-D2^k}}
\end{align}
But since $2^{nk}\exp\sparen{-D2^{k}}\rightarrow 0$ as $k\rightarrow \infty$, the contribution from (\ref{sumregionone}) is bounded  independent of $\gamma, \gamma'$.

Similarly, in region 3 an application of Lemma \ref{propertyA} to the first part of the exponential contribution (\ref{expxi}) gives:  
\begin{align*}
&\frac{|2^k\omega_{\gamma}(0)-2^{k'}\omega_{\gamma'}(t)|^2}{4(2^k|\omega_{\gamma}(0)|+2^{k'}|\omega_{\gamma'}(t)|)}>\frac{\sparen{\frac{2^{k'}}{C(T,a)}-2^k}^2}{4(2^k+2^{k'}C(T,a))}\\&>\frac{2^{k'-3}}{C(T,a)^3}-\frac{2^{k-2}}{C(T,a)^2}>D2^{k'}.
\end{align*}
Again, the same estimates as in region 1 for the number of the $x_{\gamma'}(t)$ and their exponential contribution hold, and the number of  vectors $\omega_{i,k'}$ in each annulus for fixed $k'$ is still $\mathcal{O}(2^{\frac{nk'}{2}})$. For the size of $\beta_{\gamma,\gamma'}$ from (\ref{factor}) and the fact $k'>k+k_0$, we have
\begin{align*}
&\beta_{\gamma,\gamma'} \leq \nonumber
 \sparen{\frac{C_{\epsilon}^22^{\frac{k}{2}-\epsilon k}2^{\frac{k'}{2}-\epsilon k'}C(T,a)}{4\pi(2^{k-1}+\frac{2^{k'}}{C(T,a)})}}^{\frac{n}{2}}<D\sparen{2^{\frac{k}{2}-\epsilon k}2^{-\frac{k'}{2}-\epsilon k'}}^{\frac{n}{2}}.
\end{align*}
Thus
\begin{align}\label{sumregionthree}
&\sum\limits_{\gamma': k'>k+k_0}|b_E(\gamma,\gamma',t)|=\mathcal{O}\sparen{\sum\limits_{\substack{(i,k')\\ k'>k+k_0}}2^{\frac{nk}{4}-\frac{\epsilon nk}{2}}2^{\frac{nk'}{4}+\frac{\epsilon nk'}{2}}\exp\sparen{-D2^{k}}}\\&=  \nonumber
\mathcal{O}\sparen{\sum\limits_{k'>k+k_0}2^{\frac{3nk'}{4}+\frac{kn}{4}+\frac{\epsilon n(k'-k)}{2}}\exp\sparen{-D2^{k'}}}.
\end{align}
By hypothesis, $k'>k+k_0$, so the exponential term dominates the sum here as well so the contribution from (\ref{sumregionthree}) is bounded independent of $\gamma, \gamma'$. 

If we try to simply apply Lemma \ref{propertyA} in region 2, as we did in regions 1 and 3 we get a constant bound on the exponential contributions (\ref{expxi}) and (\ref{expx}) which is not enough to dominate the contributions to the sum from the number of $x_{\gamma}$ and $\xi_{\gamma}$. Therefore the application of Lemma \ref{propertyB} to the exponential term is essential in region 2 since the treatment of the exponential contribution to the summation is more delicate there. The key is that the additional Lemma \ref{propertyB} allows us to sum over unpropagated variables which we have information about given from the construction that they are fixed in space. 

In region 2, by homogeneity and the fact $|k-k'|\leq k_0$ the entire exponential term can be re-written as follows 
\begin{align}\label{hom1}
&\frac{\sabs{\xi_{\gamma'}(t)-\xi_{\gamma}}^2}{4(|\xi_{\gamma}|+|\xi_{\gamma'}(t)|)}+ \frac{|\xi_{\gamma'}(t)||\xi_{\gamma}|}{|\xi_{\gamma}|+|\xi_{\gamma'}(t)|}
\sabs{x_{\gamma'}(t)-x_{\gamma}}^2 \\ \nonumber \sim& 2^{k'}d^2(\mathcal{U}(t,0)(x_{\gamma},2^{k-k'}\omega_{\gamma});(x_{\gamma'},\omega_{\gamma'}))
\end{align}
Applying Lemma (\ref{propertyB}) to (\ref{hom1}) we obtain
\begin{align}\label{hom2}
&\frac{\sabs{\xi_{\gamma'}(t)-\xi_{\gamma}}^2}{4(|\xi_{\gamma}|+|\xi_{\gamma'}(t)|)}+ \frac{|\xi_{\gamma'}(t)||\xi_{\gamma}|}{|\xi_{\gamma}|+|\xi_{\gamma'}(t)|}
\sabs{x_{\gamma'}(t)-x_{\gamma}}^2 \\ \nonumber \sim & 2^{k'}d^2((x_{\gamma},2^{k-k'}\omega_{\gamma});\mathcal{U}(0,t)(x_{\gamma'},\omega_{\gamma'}))
\end{align}
where the constants in this equivalence relation may depend on $k_0$ as well but are uniform in $T$. 

Since $x_{\gamma'}=\Delta x_{\gamma'}\alpha'$ we can factor out the scaling $\Delta x_{\gamma'}=C_{\epsilon}2^{-\frac{k'}{2}-\epsilon k'}$ from part of the right hand side of (\ref{hom2})
\[
2^{k'}\sabs{x_{\gamma'}-x_{\gamma}(-t)}^2=C_{\epsilon}^{2}2^{-2\epsilon k'}\sabs{\alpha'-\epsilon_0}^2
\]
where we have set $\epsilon_0=(\Delta x_{\gamma'})^{-1}x_{\gamma}(-t)$. Substituting $\lambda=C_{\epsilon}^{-2}2^{2\epsilon k'}$, an application of the integral estimates in the appendix gives
\begin{align}\label{xband}
&\sum\limits_{\alpha'\in\mathbb{Z}^n,|\alpha'|<R}\exp\sparen{-2^{k'}\sabs{x_{\gamma'}-x_{\gamma}(-t)}^2}<\sum\limits_{\alpha'\in\mathbb{Z}^n}\exp\sparen{-C^2_{\epsilon}2^{-2\epsilon k'}\sabs{\alpha'-\epsilon_0}^2}\\&=\nonumber
\mathcal{O}\sparen{2^{\epsilon k'n}}
\end{align}
But looking at inequality (\ref{factor}) the $k'$ dependence in this last bound is exactly canceled by the size of $\beta_{\gamma,\gamma'}$ in region 2. 
Since $|k-k'|\leq k_0$, the other part of the exponential contribution we tackle with an argument similar to that of Lemma \ref{lemmaone} applied to $\xi=2^k\omega_{\gamma}(-t)$ which implies the sum 
\begin{align}\label{band}
\sum\limits_{\substack{(i,k')\\ |k'-k|\leq k_0}}\exp\sparen{2^{-k'}\sabs{2^{k'}\omega_{\gamma'}-2^k\omega_{\gamma}(-t)}^2}
\end{align}
is bounded independent of $\gamma,\gamma'$. From these bounds and from the equivalence relation (\ref{hom2}) we can conclude   
\begin{align}\label{sumregiontwo}
\sum\limits_{\gamma': |k'-k|\leq k_0}|b_E(\gamma,\gamma',t)|=\mathcal{O}\sparen{1}
\end{align}
and thus 
\begin{align*}
\sum\limits_{\gamma'}|b_E(\gamma,\gamma',t)|=\mathcal{O}\sparen{1}
\end{align*}

If we reverse the roles of $\gamma$ and $\gamma'$ we can run a similar argument to the one above to bound 
\[
\sum\limits_{\gamma}|b_E(\gamma,\gamma',t)|
\]
The bounds in each of the regions $|k-k'|\leq k_0$, $k>k'+k_0$ and $k<k'-k_0$ will follow almost identically. The main difference in the argument will be the region where $|k-k'|\leq k_0$ we do not need to apply Lemma \ref{propertyB} since the $\gamma$ variables are not propagated. In this way we obtain the desired bound (\ref{star}).
 
For the estimate (\ref{doublestar}), we examine 
\begin{equation}\label{bigT}
\sum\limits_{\gamma'}|b_T(\gamma,\gamma',t)|
\end{equation}
The only difference between the bound on $|b_E(\gamma,\gamma',t)|$ and that of $|b_T(\gamma,\gamma',t)|$ is the factor of  $|\xi_{\gamma}-\xi_{\gamma'}(t)|^2+ |\xi_{\gamma'}(t)||\xi_{\gamma}||x_{\gamma}-x_{\gamma'}(t)|^2$. In region 1, application of Lemma \ref{propertyA} gives
\begin{align*}
&|\xi_{\gamma}-\xi_{\gamma'}(t)|^2+ |\xi_{\gamma'}(t)||\xi_{\gamma}||x_{\gamma}-x_{\gamma'}(t)|^2\\ &\leq (|\xi_{\gamma}|+|\xi_{\gamma'}(t)|)^2+ |\xi_{\gamma'}(t)||\xi_{\gamma}|(|x_{\gamma}-x_{\gamma'}|+|x_{\gamma'}(t)-x_{\gamma'}|)^2 \\ &\leq 
\sabs{2^k+2^{k'}C(T,a)}^2+2^{k}2^{k'}C(T,a)(R+T)^2
\leq D2^{2k}
\end{align*}
The rest of the estimates on $\beta_{\gamma,\gamma'}$ and the exponential contribution stay the same. Therefore by (\ref{sumregionone})
\begin{align*}
&\sum\limits_{\gamma': k'<k-k_0}|b_T(\gamma,\gamma',t)|=\mathcal{O}\sparen{\sum\limits_{\substack{(i,k')\\ k'<k-k_0}}2^{2k}2^{-\frac{nk}{4}-\frac{\epsilon nk}{2}}2^{\frac{3nk'}{4}+\frac{\epsilon nk'}{2}}\exp\sparen{-D2^{k}}}\\&=  \nonumber
\mathcal{O}\sparen{\sum\limits_{k'<k-k_0}2^{2k}2^{nk'}\exp\sparen{-D2^k}}= \nonumber\mathcal{O}\sparen{2^{(n+2)k}\exp\sparen{-D2^k}}
\end{align*}
and as $k\rightarrow \infty$, $2^{(n+2)k}\exp(-D2^{k})\rightarrow 0$ so the sum in question is also uniformly bounded independent of $\gamma, \gamma'$

Similarly, in region 3, by Lemma \ref{propertyA} the extra factor is bounded by: 
\begin{align*}
&|\xi_{\gamma}-\xi_{\gamma'}(t)|^2+ |\xi_{\gamma'}(t)||\xi_{\gamma}||x_{\gamma}-x_{\gamma'}(t)|^2\\ &\leq (|\xi_{\gamma}|+|\xi_{\gamma'}(t)|)^2+ |\xi_{\gamma'}(t)||\xi_{\gamma}|(|x_{\gamma}-x_{\gamma'}|+|x_{\gamma'}(t)-x_{\gamma'}|)^2 \\ &\leq 
\sabs{2^k+2^{k'}C(T,a)}^2+2^{k}2^{k'}C(T,a)(R+T)^2 
\leq D2^{2k'}
\end{align*}
Again, the rest of the estimates stay the same, so that analogous to (\ref{sumregionthree})
\begin{align}
&\sum\limits_{\gamma': k'>k+k_0}|b_T(\gamma,\gamma',t)|=\mathcal{O}\sparen{\sum\limits_{\substack{(i,k')\\ k'>k-+k_0}}2^{2k'}2^{\frac{nk}{4}-\frac{\epsilon nk}{2}}2^{\frac{nk'}{4}+\frac{\epsilon nk'}{2}}\exp\sparen{-D2^{k}}}\\&=  \nonumber
\mathcal{O}\sparen{\sum\limits_{k'>k+k_0}2^{2k'}2^{\frac{3nk'}{4}+\frac{kn}{4}+\frac{\epsilon n(k'-k)}{2}}\exp\sparen{-D2^{k'}}}
\end{align}
and as before this sum also converges independent of $\gamma,\gamma'$ since $k'>k+k_0$.

The only region where the extra factor in question makes a difference is in region 2. As in the treatment of the sum of $|b_E(\gamma,\gamma')|$ over $\gamma'$,  Lemma \ref{propertyB} is again crucial. By homogeneity and Lemma \ref{propertyB} the extra factor in the bounds for  $|b_T(\gamma,\gamma',t)|$ can be rewritten as
\[
|\xi_{\gamma}-\xi_{\gamma'}(t)|^2+ |\xi_{\gamma'}(t)||\xi_{\gamma}||x_{\gamma}-x_{\gamma'}(t)|^2 \sim 2^{2k'}d^2(\mathcal{U}'(0,t)(x_{\gamma},2^{k-k'}\omega_{\gamma});(x_{\gamma'},\omega_{\gamma'}))
\]
and exponential factor in the bounds still follows the equivalence relation (\ref{hom2}). With these bounds in mind we split the sum
\begin{align}\label{bTone}
&\sum\limits_{\gamma':|k-k'|\leq k_0}\beta_{\gamma,\gamma'}(|\xi_{\gamma}-\xi_{\gamma'}(t)|^2+ |\xi_{\gamma'}(t)||\xi_{\gamma}||x_{\gamma}-x_{\gamma'}(t)|^2)\\ &\times \nonumber
\exp\sparen{-\frac{|2^k\omega_{\gamma}-2^{k'}\omega_{\gamma'}(t)|^2}{4(2^k|\omega_{\gamma}|+2^{k'}|\omega_{\gamma'}(t)|)}-\frac{|\xi_{\gamma}||\xi_{\gamma'}(t)|}{|\xi_{\gamma}|+|\xi_{\gamma'}(t)|}|x_{\gamma}-x_{\gamma'}(t)|^2}
\end{align}
into two equivalent pieces. The sum (\ref{bTone}) becomes 
\begin{align}\label{pieceoneT}
&\sum\limits_{\gamma':|k-k'|\leq k_0}\beta_{\gamma,\gamma'}2^{2k'}\sabs{x_{\gamma'}-x_{\gamma}(-t)}^2\exp\sparen{-2^{k'}\sabs{x_{\gamma'}-x_{\gamma}(-t)}^2} \\& \times \nonumber
\exp\sparen{2^{-k'}\sabs{2^{k'}\omega_{\gamma'}-2^{k}\omega_{\gamma}(-t)}^2}
\end{align}
and
\begin{align}\label{piecetwoT}
&\sum\limits_{\gamma':|k-k'|\leq k_0}\beta_{\gamma,\gamma'}\sabs{2^{k'}\omega_{\gamma'}-2^k\omega_{\gamma}(-t)}^2\exp\sparen{-2^{k'}\sabs{x_{\gamma'}-x_{\gamma}(-t)}^2}\\& \times \nonumber \exp\sparen{2^{-k'}\sabs{2^{k'}\omega_{\gamma'}-2^{k}\omega_{\gamma}(-t)}^2}.
\end{align}
To handle the sum (\ref{pieceoneT}), again since $x_{\gamma'}=\Delta x_{\gamma'}\alpha'$ we can factor out the scaling $\Delta x_{\gamma'}=C_{\epsilon}2^{-\frac{k'}{2}-\epsilon k'}$ from part of the right hand side of (\ref{hom2})
\[
2^{k'}\sabs{x_{\gamma'}-x_{\gamma}(-t)}^2=C_{\epsilon}^{2}2^{-2\epsilon k'}\sabs{\alpha'-\epsilon_0}^2
\]
and also from the new multiplying factor in (\ref{pieceoneT})
\[
2^{2k'}\sabs{x_{\gamma'}-x_{\gamma}(-t)}^2=C_{\epsilon}^{2}2^{-2\epsilon k'+k'}\sabs{\alpha'-\epsilon_0}^2
\]
where we have set $\epsilon_0=(\Delta x_{\gamma'})^{-1}x_{\gamma}(-t)$. Substituing $\lambda=C_{\epsilon}^{-2}2^{2\epsilon k'}$, an application of the second integral estimate in Appendix 2 gives
\begin{align}\label{xbandtwo}
&\sum\limits_{\alpha'\in\mathbb{Z}^n,|\alpha'|<R}\sparen{2^{2k'}\sabs{x_{\gamma'}-x_{\gamma}(-t)}^2}\exp\sparen{-2^{k'}\sabs{x_{\gamma'}-x_{\gamma}(-t)}^2}\\ &<  \nonumber
\sum\limits_{\alpha'\in\mathbb{Z}^n}\sparen{2^{-2\epsilon k'+k'}\sabs{\alpha'-\epsilon_0}^2}\exp\sparen{-2^{-2\epsilon k'}\sabs{\alpha'-\epsilon_0}^2}\\ &= \nonumber
\mathcal{O}\sparen{2^{k'}(2^{2\epsilon k'})^{\frac{n}{2}}}.
\end{align}
Using the previous estimates (\ref{factor}) and (\ref{band}) and the fact $|k-k'|\leq k_0$ the sum (\ref{pieceoneT}) is $\mathcal{O}(2^{k})$. For the second sum (\ref{piecetwoT}), estimate (\ref{xband}) still applies for the sum over $\alpha'$ so we are reduced to examining 
\begin{align}\label{expcontT}
\sum\limits_{\substack{(i,k')\\ |k'-k|\leq k_0}}\sabs{2^{k'}\omega_{\gamma'}-2^k\omega_{\gamma}(-t)}^2\exp\sparen{2^{-k'}\sabs{2^{k'}\omega_{\gamma'}-2^{k}\omega_{\gamma}(-t)}^2}.
\end{align}
Now if we consider the same sets defined in Lemma \ref{lemmaone} with $\xi=2^k\omega_{\gamma}(-t)$ for the first set $\mathcal{A}$ we get 
\[
\sabs{2^{k'}\omega_{\gamma'}-2^k\omega_{\gamma}(-t)}^2\leq 2^k
\]
implying from previous bounds on the number of $\omega_{\gamma'}$ in $\mathcal{A}$: 
\begin{align*}
&\sum\limits_{\mathcal{A}}\sabs{2^{k'}\omega_{\gamma'}-2^k\omega_{\gamma}(-t)}^2\exp\sparen{2^{-k'}\sabs{2^{k'}\omega_{\gamma'}-2^k\omega_{\gamma}(-t)}^2}\leq 3^{n+1}2^k \\ 
&\leq  2^{\frac{k}{2}}j+C(T,a)2^k\leq D2^k
\end{align*}
In each of the sets $\mathcal{B}_j$ 
\[
\sabs{2^{k'}\omega_{\gamma'}-2^k\omega_{\gamma}(-t)}^2\leq j^22^k
\]
and similarly from an argument in Lemma 1 we can deduce 
\begin{align*}
&\sum\limits_{\mathcal{B}}\sabs{2^{k'}\omega_{\gamma'}-2^k\omega_{\gamma}(-t)}^2\exp\sparen{2^{-k'}\sabs{2^{k'}\omega_{\gamma'}-2^k\omega_{\gamma}(-t)}^2}\\
&\leq \sum\limits_{j=1}^{\infty}2^n2^k(j+1)^{n+2}\exp\sparen{-\frac{(j-1)^2}{2}}\leq D2^k.
\end{align*}
Now it is easy to see the contribution there is only a small (or 0) contribution coming from the sets $\mathcal{C}$, $\mathcal{D}$, and $\mathcal{E}$ (since $|k-k'|\leq k_0$), and this contribution is uniformly bounded independent of $\gamma,\gamma'$. From here it follows that the second sum (\ref{piecetwoT}) is $\mathcal{O}(2^k)$. Combining the estimates on above gives 
\[
\sum\limits_{\gamma':|k'-k|\leq k_0}|b_T(\gamma,\gamma',t)|\leq D2^{k}.
\]
Since the contribution from regions 1 and 3 was uniformly bounded independent of $\gamma,\gamma'$, we find
\[
\sum\limits_{\gamma'}|b_T(\gamma,\gamma',t)|\leq D2^{k}.
\]

By symmetry we can use similar estimates to obtain the second bound in \ref{doublestar}. Again, the main difference will be that there is no need to apply Lemma \ref{propertyB} in region 2. With these estimates we conclude the theorem.  

\section{Construction of the Parametrix}
With the frame of functions established, we turn our attention to constructing an appropriate parametrix for the Cauchy problem 
\begin{align*}
&Tu(t,x)=(\partial_t^2-A(t,x,\partial_x))u(t,x)=0\\&
u(t,x)|_{t=0}=f(x)\\&
\partial_tu(t,x)|_{t=0}=h(x)
\end{align*}
where $f(x)$ and $h(x)$ are functions in $L^2(\mathbb{R}^n)$. 
We will construct operators $\mathcal{C}(t,t')$ and $\mathcal{S}(t,t')$ out of families of functions which are related to the frame functions. This section will follow the work of Hart Smith in [5] very closely. 

As earlier $\mathcal{U}(t,t')$ denotes the evolution operator associated to $\mathcal{H}^{-}=\tau-q$. Additionally  we denote  the evolution operator associated to the Hamiltonian $\mathcal{H}^{+}=\tau+q$ as $\mathcal{V}(t,t')$. Setting 
\[
\mathcal{U}(t,t')(x_{\gamma}(0),\xi_{\gamma}(0))=(x^{+}_{\gamma}(t,t'),\xi^{+}_{\gamma}(t,t'))
\]
and
\[
\mathcal{V}(t,t')(x_{\gamma}(0),\xi_{\gamma}(0))=(x^{-}_{\gamma}(t,t'),\xi^{-}_{\gamma}(t,t'))
\]
Then accordingly 
\begin{align*}
&\phi_{\gamma}^{\pm}(t,t',x)\\&=
\sparen{\frac{|\xi_{\gamma}(t)|\Delta x_{\gamma}}{2\pi}}^{\frac{n}{2}}\exp\sparen{i\xi_{\gamma}^{\pm}(t,t')\cdot(x-x^{\pm}_{\gamma}(t,t'))-|\xi_{\gamma}^{\pm}(t,t')||x-x_{\gamma}^{\pm}(t,t')|^2}
\end{align*}
and we let
\begin{align*}
\Omega_{\gamma}^{\pm}(t,t',x)=
\frac{\phi_{\gamma}^{\pm}(t,t',x)}{q(t',x_{\gamma},\xi_{\gamma})}
\end{align*} 

From these definitions we construct the following operators $\mathcal{C}(t,t')$ 
\begin{align*}
\Pi^0 \mathcal{C}(t,t')\Pi^0 f & = P^0_2 B_{\mathcal{C}}(t,t') P_1^0 f \\
=&\sum\limits_{\gamma,\gamma'}b_{\mathcal{C}}(\gamma,\gamma',t)c(\gamma')\phi_{\gamma}(x)
\end{align*}
and $\mathcal{S}(t,t')$: 
\begin{align*}
\Pi^0 \mathcal{S}(t,t')\Pi^0 f & = P^0_2 B_{\mathcal{S}}(t,t') P_1^0 f \\
=&\sum\limits_{\gamma,\gamma'}b_{\mathcal{S}}(\gamma,\gamma',t)c(\gamma')\phi_{\gamma}(x)
\end{align*}
where 
\begin{align*}
b_{\mathcal{C}}(\gamma,\gamma',t)=\frac{1}{2}\int_{\mathbb{R}^n}\overline{\phi_{\gamma}(x)}\sparen{\phi_{\gamma'}^{+}(t,t',x)+\phi_{\gamma'}^{-}(t,t',x)}\,dx
\end{align*}
and
\begin{align*}
b_{\mathcal{S}}(\gamma,\gamma',t)=\frac{1}{2}\int_{\mathbb{R}^n}\overline{\phi_{\gamma}(x)}\sparen{\Omega_{\gamma'}^{+}(t,t',x)-\Omega_{\gamma'}^{-}(t,t',x)}\,dx
\end{align*}
denote the entries of the matrices $B_{\mathcal{C}}(t,t')$ and $B_{\mathcal{S}}(t,t')$ respectively. 

\begin{thm}
$T\mathcal{C}(t,t')$ and $T\mathcal{S}(t,t')$ are bounded operators of order one and zero respectively with operator norms which are uniformly bounded on intervals where $t-t'$ is finite. 
\end{thm}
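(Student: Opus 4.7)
The plan is to reduce the theorem to Theorem \ref{boundedtheorem} by exploiting the factorization $p = (\tau - q)(\tau + q)$ and the linearity of the construction. I would write
\[
\mathcal{C}(t,t') = \tfrac{1}{2}(E^+(t,t') + E^-(t,t')), \qquad \mathcal{S}(t,t') = \tfrac{1}{2}(\tilde E^+(t,t') - \tilde E^-(t,t')),
\]
where $E^{\pm}(t,t')$ is the propagation operator of Section 2 built from the frame functions $\phi_\gamma^{\pm}$ transported along the null bicharacteristics of $\mathcal{H}^{\pm} = \tau \pm q$, and $\tilde E^{\pm}(t,t')$ is the variant obtained by replacing $\phi_\gamma^{\pm}$ with $\Omega_\gamma^{\pm} = \phi_\gamma^{\pm}/q(t',x_\gamma,\xi_\gamma)$. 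The first observation is that every argument in Section 2 applies equally to $E^+$ and $E^-$: Lemma \ref{leading}, Lemma \ref{propertyA}, and Lemma \ref{propertyB} rely only on $q$ being positive, bounded, and $C^{1,1}$, which holds for both signs; in particular the phase cancellation in Lemma \ref{leading} is a consequence of the Hamilton equations, which are common to both sign choices. Therefore $TE^{\pm}(t,t')$ are bounded of order one with constants uniform for $t,t' \in [-T,T]$, and by linearity $T\mathcal{C}(t,t')$ is bounded of order one.

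For $T\mathcal{S}(t,t')$, the corresponding matrix entries factor as
\[
b_{T\mathcal{S}}(\gamma,\gamma',t) = \frac{1}{2\,q(t',x_{\gamma'},\xi_{\gamma'})}\bigl(b_{TE^+}(\gamma,\gamma',t) - b_{TE^-}(\gamma,\gamma',t)\bigr).
\]
Uniform positive definiteness of $A$ together with $\tfrac{1}{2} \leq |\omega_{i,k'}| < 1$ forces $q(t',x_{\gamma'},\xi_{\gamma'}) \sim |\xi_{\gamma'}| \sim 2^{k'}$ uniformly in $\gamma'$, so multiplying by $q^{-1}$ inserts a factor of $2^{-k'}$ into the Schur bounds (\ref{doublestar}). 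This yields
\[
\sum_\gamma |b_{T\mathcal{S}}(\gamma,\gamma',t)| \leq C \cdot 2^{-k'} \cdot 2^{k'} = C
\]
immediately. For the reverse sum, the bound $\sum_{\gamma'}|b_{T\mathcal{S}}| \leq C \cdot 2^{-k'} \cdot 2^{k}$ must be argued region by region as in Section 2: in regions 1 and 3 the superexponential decay in $2^{k}$ or $2^{k'}$ absorbs the extra polynomial factor $2^{k - k'}$ trivially, while in region 2 the constraint $|k - k'| \leq k_0$ makes $2^{k - k'}$ bounded by $2^{k_0}$. Hence Schur's lemma gives that $T\mathcal{S}(t,t')$ is bounded of order zero.

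Uniform control on intervals where $t - t'$ is finite follows because every constant in the Section 2 estimates (the $C(T,a)$ of Lemma \ref{propertyA}, the $D_1, D_2$ of Lemma \ref{propertyB}, and the $k_0$ of the region decomposition) depends only on the length of the $t$-interval, not on the base point $t'$. The main technical obstacle is the bookkeeping for $\sum_{\gamma'}|b_{T\mathcal{S}}|$: the $2^{-k'}$ weight lives on the source index rather than the target, so unlike $\sum_\gamma$ it does not cancel the $2^k$ growth automatically, and one must lean on the three-region decomposition of Section 2 to confine the potentially troublesome $k - k'$ mismatch to a compact range. Once this is in place, the whole statement is a direct corollary of Theorem \ref{boundedtheorem} applied to each sign.
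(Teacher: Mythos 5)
Your proposal is correct and takes essentially the same route as the paper, whose proof is a single sentence observing that the matrix entries of $B_{\mathcal{C}}(t,t')$ and $B_{\mathcal{S}}(t,t')$ are linear combinations of inner products of propagated frame functions with the frame, so the Schur-type bounds of Theorem \ref{boundedtheorem} carry over directly. You simply spell out the content of that ``immediate extension,'' in particular the key point that the $q(t',x_{\gamma'},\xi_{\gamma'})^{-1}\sim 2^{-k'}$ factor in $\Omega_{\gamma'}^{\pm}$ reduces the order of $T\mathcal{S}$ from one to zero, and that the three-region decomposition still handles the $\sum_{\gamma'}$ Schur bound because the $2^{k-k'}$ mismatch is either compactly bounded (region 2) or dominated by the superexponential decay (regions 1 and 3).
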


\begin{proof} By construction the matrix entries $B_{\mathcal{C}}(t,t')$ and $B_{\mathcal{S}}(t,t')$ consist of linear combinations of inner products of propagated frame functions with the frame, so the proof is an immediate extension of Theorem \ref{boundedtheorem} in section 2. 
\end{proof}
\begin{thm}
\[
\mathcal{C}(t',t')\sim I \qquad \partial_t\mathcal{C}(t',t')=0
\]
and
\[
\mathcal{S}(t',t')=0 \qquad \partial_t\mathcal{S}(t',t')\sim I
\]
to leading order. 
\end{thm}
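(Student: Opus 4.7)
The plan is to evaluate $b_{\mathcal{C}}$, $b_{\mathcal{S}}$, and their $t$-derivatives directly at $t=t'$, using the facts that both bicharacteristic flows $\mathcal{U}(t',t')$ and $\mathcal{V}(t',t')$ are the identity and that $\mathcal{H}^{\pm} = \tau\pm q$ generate oppositely-signed tangent vectors at $t=t'$. First I would record the trivial observation $x_{\gamma}^{\pm}(t',t')=x_{\gamma}$, $\xi_{\gamma}^{\pm}(t',t')=\xi_{\gamma}$, so $\phi_{\gamma}^{\pm}(t',t',x)=\phi_{\gamma}(x)$ and $\Omega_{\gamma}^{\pm}(t',t',x)=\phi_{\gamma}(x)/q(t',x_{\gamma},\xi_{\gamma})$. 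Substituting into the definitions,
\[
b_{\mathcal{C}}(\gamma,\gamma',t')=\int\overline{\phi_{\gamma}}\phi_{\gamma'}\,dx=G_{\gamma,\gamma'},\qquad b_{\mathcal{S}}(\gamma,\gamma',t')=0,
\]
where $G$ is the Gram matrix of the frame. The second equality yields $\mathcal{S}(t',t')=0$ immediately. For the first, I would identify $G$ with the matrix of $P_{1}^{0}P_{2}^{0}$ on $l^{2}(\Gamma)$; then $\Pi^{0}\mathcal{C}(t',t')\Pi^{0}=P_{2}^{0}GP_{1}^{0}=(\Pi^{0})^{2}$, and since Theorem \ref{theoremone} (with $m=0$) gives $\Pi^{0}\sim I$ on $L^{2}$, conclude $\mathcal{C}(t',t')\sim I$.

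For $\partial_{t}\mathcal{C}(t',t')=0$ I would differentiate $\phi_{\gamma}^{\pm}$ in $t$ through its dependence on $(x_{\gamma}^{\pm}(t,t'),\xi_{\gamma}^{\pm}(t,t'))$, producing a chain-rule sum of terms each linear in $\dot{x}^{\pm}$ or $\dot{\xi}^{\pm}$ at $t=t'$. Because $\mathcal{H}^{+}=\tau+q$ and $\mathcal{H}^{-}=\tau-q$ differ only by the sign of $q$, their bicharacteristic equations give $\dot{x}^{+}=-\dot{x}^{-}$ and $\dot{\xi}^{+}=-\dot{\xi}^{-}$ at $t=t'$. Every term in the chain rule is therefore antisymmetric under $\pm$ exchange, so $\partial_{t}(\phi_{\gamma}^{+}+\phi_{\gamma}^{-})|_{t=t'}=0$ identically; this forces $\partial_{t}b_{\mathcal{C}}(\gamma,\gamma',t)|_{t=t'}=0$ and hence $\partial_{t}\mathcal{C}(t',t')=0$.

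For $\partial_{t}\mathcal{S}(t',t')\sim I$ the same chain rule, now applied to the \emph{difference} $\phi_{\gamma}^{+}-\phi_{\gamma}^{-}$, makes all terms \emph{add}. The leading contribution (of order $|\xi_{\gamma}|$) comes from differentiating the linear phase piece $-i\xi_{\gamma}\cdot x_{\gamma}^{\pm}$: using $\dot{x}^{\pm}=\mp q_{\xi}(t',x_{\gamma},\xi_{\gamma})$ and Euler's identity $\xi\cdot q_{\xi}=q$ (since $q$ is positive homogeneous of degree one in $\xi$), we get $-i\xi_{\gamma}\cdot\dot{x}^{\pm}=\pm iq(t',x_{\gamma},\xi_{\gamma})$, so
\[
\partial_{t}(\phi_{\gamma}^{+}-\phi_{\gamma}^{-})|_{t=t'}=2iq(t',x_{\gamma},\xi_{\gamma})\,\phi_{\gamma}(x)+R_{\gamma}(x),
\]
where $R_{\gamma}$ collects the subleading pieces (from $\dot{\xi}^{\pm}\cdot(x-x_{\gamma})$, from the Gaussian envelope, and from $\partial_{t}|\xi_{\gamma}^{\pm}|^{n/2}$). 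The denominator $q(t',x_{\gamma},\xi_{\gamma})$ in $\Omega_{\gamma}^{\pm}$ is exactly what is needed to cancel the $2iq$, so
\[
\partial_{t}b_{\mathcal{S}}(\gamma,\gamma',t)|_{t=t'}=iG_{\gamma,\gamma'}+\text{lower order},
\]
and the same $\Pi^{0}$ conjugation used for $\mathcal{C}$ gives $\partial_{t}\mathcal{S}(t',t')\sim iI\sim I$ to leading order, with the $i$ absorbed into the normalization convention of the $\pm$ splitting.

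The hard part will be checking that $R_{\gamma}$ does produce a genuinely lower-order matrix. Each term in $R_{\gamma}$ carries an extra factor of $(x-x_{\gamma})$ against $\phi_{\gamma}$, so the integrals against $\overline{\phi_{\gamma}}$ are of exactly the Gaussian-polynomial type computed for $b_{T}$ in (\ref{bT}). The same three-region summation argument of Theorem \ref{boundedtheorem} --- close in frequency vs.\ $k'<k-k_{0}$ vs.\ $k'>k+k_{0}$, with Lemma \ref{propertyB} in the close-frequency region --- then shows these remainder operators are one order lower than the $iG$ term and so contribute only to the lower-order tail after dividing by $q\sim 2^{k}$.
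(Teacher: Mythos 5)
Your proof is correct and takes essentially the same approach as the paper's, which is extremely terse (a few lines); you fill in the details, including the antisymmetry argument for $\partial_t\mathcal{C}(t',t')=0$ that the paper omits, the identification $P_2^0GP_1^0=(\Pi^0)^2$, and the explicit Euler-identity computation $\xi_\gamma\cdot q_\xi=q$ behind the cancellation with the denominator in $\Omega_\gamma^{\pm}$. You are also more careful than the paper about the factor of $i$: the paper's displayed formula $\bigl(1-\tfrac{q_t}{q^2}\bigr)-\bigl(-1+\tfrac{q_t}{q^2}\bigr)$ appears to drop it, whereas your computation of $-\,i\xi_\gamma\cdot\dot{x}^{\pm}=\pm iq$ correctly shows $\partial_t\mathcal{S}(t',t')\sim iI$, which is indeed a normalization issue in the definition of $\Omega_\gamma^{\pm}$ rather than a defect in your argument.
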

\begin{proof}
The first statement follows directly from the definition of $\Pi^0$ and the integral calculations in section 2. For the second, we compute (where $q=q(t',x_{\gamma},\xi_{\gamma})$)
\[
((\partial_t\Omega^+(t',t',y)-\partial_t\Omega^-(t',t',y))|=\sparen{\sparen{1-\frac{q_t}{q^2}}-\sparen{-1+\frac{q_t}{q^2}}}\phi_{\gamma}(y)
\]
as on null bicharacteristics $\tau=\pm q$ by homogeneity 
\[
\frac{q_t}{q^2}=\mathcal{O}\sparen{\frac{1}{2^{k'}}}
\]
and the result follows. From the proceeding arguments $u(t,x)$ such that
\[
u(t,x)=\mathcal{S}(t,t')h(x)+\mathcal{C}(t,t')h(x) 
\]
is the desired parametrix solution to the Cauchy problem. 
\end{proof}
\begin{thm}\label{Cauchy}
If $-1\leq m\leq 2$ and $f\in H^{m+1}(\mathbb{R}^n), h\in H^{m}(\mathbb{R}^n)$ and $F\in L^1([-T,T],; H^{m}(\mathbb{R}^n))$ then there exists a $G\in L^1([-T,T],; H^{m}(\mathbb{R}^n))$ such that 
\[
u(t,x)=\mathcal{C}(t,0)f(x)+\mathcal{S}(t,0)h(x)+\int\limits_0^t(\mathcal{S}(t,s)G(s,x)\,ds
\]
and 
\[
\norm{G}_{L^1([-T,T],; H^{m}(\mathbb{R}^n))}\leq C(T)\sparen{\norm{f}_{H^{m+1}(\mathbb{R}^n)}+\norm{h}_{H^{m}(\mathbb{R}^n)}+\norm{F}_{L^1([-T,T],; H^{m}(\mathbb{R}^n))}}
\]
solves the Cauchy problem
\begin{align*}
&Tu(t,x)=(\partial_t^2-A(t,x,\partial_x))u(t,x)=F(t,x)\\&
u(t,x)|_{t=0}=f(x)\\&
\partial_tu(t,x)|_{t=0}=h(x)
\end{align*}
in the weak sense. If $f$ and $h$ are both identically zero and $F$ is also zero for $\forall t\in [-T,T]$ then $G$ and $u$ will vanish as well. 
\end{thm}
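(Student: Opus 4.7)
The plan is Duhamel's principle combined with a Picard iteration on a Volterra equation. The ansatz $u=\mathcal{C}(t,0)f+\mathcal{S}(t,0)h+\int_0^t \mathcal{S}(t,s)G(s)\,ds$ handles the Cauchy data to leading order via the preceding theorem ($\mathcal{C}(t',t')\sim I$, $\partial_t\mathcal{C}(t',t')=0$, $\mathcal{S}(t',t')=0$, $\partial_t\mathcal{S}(t',t')\sim I$), so the weak initial conditions $u(0)=f$ and $\partial_t u(0)=h$ are automatic modulo subleading error. $G$ will be chosen so that the Duhamel convolution absorbs $F$ together with the $T$-errors committed by the first two terms.

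Differentiating the convolution twice in $t$ and using $\mathcal{S}(t,t)=0$ together with $\partial_t\mathcal{S}(t,t)=I+E(t)$, where $E(t)$ is the subleading error operator from the preceding theorem (traceable to the bound $q_t/q^2=\mathcal{O}(2^{-k'})$, hence of negative order and uniformly small on $H^m$), yields
\begin{equation*}
T\!\int_0^t \mathcal{S}(t,s)G(s)\,ds=(I+E(t))G(t)+\int_0^t T\mathcal{S}(t,s)G(s)\,ds.
\end{equation*}
Requiring $Tu=F$ and inverting $I+E(t)$ by Neumann series produces the Volterra equation
\begin{equation*}
G(t)+\int_0^t K(t,s)G(s)\,ds=\tilde F(t),
\end{equation*}
with kernel $K(t,s)=(I+E(t))^{-1}T\mathcal{S}(t,s)$ bounded on $H^m$ uniformly in $(t,s)\in[-T,T]^2$ and right-hand side $\tilde F=(I+E)^{-1}(F-T\mathcal{C}(t,0)f-T\mathcal{S}(t,0)h)$ lying in $L^1([-T,T];H^m)$ by the Section 2 operator-boundedness theorem; indeed $T\mathcal{C}(t,0)\colon H^{m+1}\to H^m$ and $T\mathcal{S}(t,0)\colon H^m\to H^m$ are bounded uniformly in $t$, so $\norm{\tilde F}_{L^1 H^m}\lesssim \norm{f}_{H^{m+1}}+\norm{h}_{H^m}+\norm{F}_{L^1 H^m}$.

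Solve by the standard Picard iteration $G_0=\tilde F$, $G_{n+1}=\tilde F-\int_0^t K(t,s)G_n(s)\,ds$. Uniform boundedness of $K$ gives an $n$-th Neumann term of size $C^n|t|^n/n!$, so the series converges in $L^\infty([-T,T];H^m)$ and therefore in $L^1$, delivering the stated norm inequality on $G$. Uniqueness of the fixed point forces $G\equiv 0$ when $f,h,F$ all vanish, and then $u\equiv 0$ by construction. The main obstacle is controlling the subleading error $E(t)$: one must confirm that the leading-order identity $\partial_t\mathcal{S}(t,t)\sim I$ of the previous theorem genuinely yields an $E(t)$ of negative order, uniformly bounded on $H^m$ throughout $-1\le m\le 2$. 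This Sobolev range is precisely the range for which the Section 2 estimates apply under the $C^{1,1}$ regularity of the coefficients, and it is exactly what dictates the hypothesis $-1\le m\le 2$ in the statement; any loss in the subleading bound would force a strict reduction in the admissible range of $m$.
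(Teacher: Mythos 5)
Your route — Duhamel ansatz plus Volterra/Picard iteration driven by the uniform boundedness of $T\mathcal{S}(t,s)$ from Theorem \ref{boundedtheorem} — is exactly the paper's route. The one place you go beyond the paper is in explicitly tracking the subleading error: you write $\partial_t\mathcal{S}(t,t)=I+E(t)$ and invert $I+E(t)$ by Neumann series before setting up the Volterra kernel, whereas the paper simply reads $\partial_t^2 v(t,x)=G(t,x)+\int_0^t\partial_t^2\mathcal{S}(t,s)G(s,x)\,ds$ as if the leading-order identity $\partial_t\mathcal{S}(t',t')\sim I$ were exact, implicitly folding the error into the iteration. Your extra care is a genuine improvement in rigor, not a different argument. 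One small inaccuracy: you claim the Picard series converges in $L^\infty([-T,T];H^m)$ and hence in $L^1$, but $G_0=\tilde F$ is only assumed $L^1$ in time, so the zeroth term need not be in $L^\infty$; the correct statement is that the iterates after the first lie in $L^\infty$, and the full series converges in $L^1([-T,T];H^m)$ with the stated bound, which is also where the paper lands. Your remark about the range $-1\le m\le 2$ being forced by the $C^{1,1}$ regularity is a reasonable gloss, but note the paper does not actually derive it in this proof either; it is inherited from Smith's framework rather than re-justified here.
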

\begin{proof}
As per Smith, we will show the existence of such a $G$ using Volterra iteration. Assuming $G\in L^1([-T,T],; H^{m}(\mathbb{R}^n))$, we let 
\[
v(t,x)=\int\limits_0^t\mathcal{S}(t,s)G(s,x)\,ds
\]
Because $\mathcal{S}(t,t')$ and $\partial_t\mathcal{S}(t,t')$ are both strongly continuous operators and $\mathcal{S}(t,t)=0$ we have $v(t,x)$ is in $C([-T,T];H^{m+1}(\mathbb{R}^n))\cap C^1([-T,T];H^{m}(\mathbb{R}^n))$ 
and also 
\[
\partial_tv(t,x)=\int\limits_0^t\partial_t\mathcal{S}(t,s)G(s,x)\,ds
\]
so it follows
\[
v(0,x)=0 \qquad \partial_tv(t,x)|_{t=0}=0
\]
Furthermore differentiating in the sense of distributions we obtain
\[
\partial_t^2v(t,x)=G(t,x)+\int\limits_0^t\partial_t^2\mathcal{S}(t,s)G(s,x)\,ds
\]
We can conclude $u(t,x)$ of the form in Theorem \ref{Cauchy} is a weak solution to the Cauchy problem if the following Volterra equation 
\begin{equation}\label{Volterra}
G(t,x)+\int\limits_0^{t}TS(t,s)G(s,x)\,ds=F(t,x)-T\sparen{\mathcal{S}(t,0)f(x)+\mathcal{S}(t,0)h(x)}
\end{equation}
holds.  Equation (\ref{Volterra}) can be solved by iteration since the operator norm of $S(t,s)$ is uniformly bounded on finite intervals of time by Theorem \ref{boundedtheorem}. Setting 
\begin{equation}\label{eqseries}
G(t,x)=F(t,x)+\sum\limits_{n=1}^{\infty}G_n(t,x)
\end{equation}
with 
\begin{align*}
G_n(t,x)=\int\limits_{0}^{t}\int\limits_{0}^{s_1} . . .  \int\limits_{0}^{s_{n-1}}S(t,s_1)S(s_1,s_2) . . . \\&
S(s_{n-1},s_n)F(s_n,x)\,ds_n. . . \,ds_1
\end{align*}
then $G(t,x)$ is a solution to the equation
\[
G(t,x)+\int\limits_{0}^{t}S(t,s)G(s,x)\,ds=F(t,x)
\]
As the series in (\ref{eqseries}) converges in  $L^1([-T,T],; H^{m}(\mathbb{R}^n))$ with norm bounded by $\exp(TC(T)\norm{F})$, this finishes the Theorem. 
\end{proof}

\section*{Appendix 1}

It is well known that 
\begin{align}\label{absbd}
\int\limits_{\mathbb{R}^n}\exp\sparen{iy\cdot \eta}\exp\sparen{-c y^2}\,dy =\sparen{\frac{\pi}{c}}^{\frac{n}{2}}\exp\sparen{-\frac{\eta^2}{4c}} 
\end{align}
We will use this fact to help us evaluate Integrals of the form 
\begin{align}\label{Dabsbd}
\int\limits_{\mathbb{R}^n}(y+b)\exp\sparen{iy\cdot \eta}\exp\sparen{-c y^2}\,dy
\end{align}
and
\begin{align}\label{Tabsbd}
\int\limits_{\mathbb{R}^n}|y+b|^2\exp\sparen{iy\cdot \eta}\exp\sparen{-c y^2}\,dy
\end{align}
Recall that for $c$ a constant, $\eta, y\in \mathbb{R}^n$ and $\alpha$ a multi-index with $n$ components 
\begin{align}\label{byparts}
i^{|\alpha|}\eta^{\alpha}\int\limits_{\mathbb{R}^n}\exp\sparen{-c y^2}\exp\sparen{i\eta\cdot y}\,dy=(-1)^{|\alpha|}\int\limits_{\mathbb{R}^n}\partial_y^{\alpha}(\exp\sparen{-cy^2})\exp\sparen{i\eta\cdot y}\,dy 
\end{align}
and also
\begin{align*}
&\frac{\partial}{\partial y}\exp\sparen{-c y^2}=-2c y \exp\sparen{-c y^2}\\ &
\frac{\partial^2}{\partial y^2}\exp\sparen{-c y^2}=\sparen{-2c+4c^2y^2} \exp\sparen{-c y^2}
\end{align*}
With these equalities in mind, (\ref{Dabsbd}) is equal
\begin{align*}
&\frac{1}{c}\int\limits_{\mathbb{R}^n}\partial_y(\exp\sparen{-cy^2})\exp\sparen{i\eta\cdot y}\,dy+b\int\limits_{\mathbb{R}^n}(\exp\sparen{-cy^2})\exp\sparen{i\eta\cdot y}\,dy \\&
=\sparen{\frac{\pi}{c}}^{\frac{n}{2}}\exp\sparen{-\frac{\eta^2}{4c}}\sparen{\frac{i\eta}{2c}+b}
\end{align*}
We can also expand and re-write the integral in (\ref{Tabsbd}) so it is equal
\begin{align*}
&\frac{1}{4c^2}\int\limits_{\mathbb{R}^n}\partial_y^{2}(\exp\sparen{-cy^2})\exp\sparen{i\eta\cdot y}\,dy +\frac{b}{c}\int\limits_{\mathbb{R}^n}\partial_y(\exp\sparen{-cy^2})\exp\sparen{i\eta\cdot y}\,dy \\&
+\sparen{b^2+\frac{1}{2c}}\int\limits_{\mathbb{R}^n}(\exp\sparen{-cy^2})\exp\sparen{i\eta\cdot y}\,dy
\end{align*}
Using the integration by parts formula \ref{byparts} is just
\begin{align}
\sparen{\frac{\pi}{c}}^{\frac{n}{2}}\exp\sparen{-\frac{\eta^2}{4c}}\sparen{-\frac{\eta^2}{4c^2}+\frac{ib\eta}{c}+b^2+\frac{1}{2c}}
\end{align}

\section{Appendix 2}
Given an integer valued function $h(\alpha)$, $h(\alpha)$ may be estimated by the Euler summation formula
\begin{equation}\label{Euler}
\sum\limits_{a\leq \alpha\leq b}h(\alpha)=\int\limits_{a}^{b}h(x)\,dx +\sum\limits_{j=1}^{m}\frac{B_j}{j!}h^{j-1}(x)|_{x=a}^{x=b} +R_m
\end{equation}
where $B_j$ is the $j^{th}$ Bernoulli number and $h^{j}(x)$ denotes the $j^{th}$ derivative of $h(x)$.  The remainder $R_m$ is defined as 
\[
R_{m}=(-1)^{m+1}\int\limits_{\mathbb{R}}\frac{B_m(\{x\})}{m!}h^m(x)\,dx
\]
The notation $\{x\}$ denotes the fractional part of $x$, and $B_m(\{x\})$ denotes the $m^{th}$ Bernoulli polynomial. Formula (\ref{Euler}) is derived in \emph{Concrete Mathematics}, cf Ref[1].

Fix $\epsilon_0\in \mathbb{R}^n$ and $\lambda \in \mathbb{R},$ with $\lambda\geq 1$. We wish to use the Euler summation formula to estimate the sums:
\begin{align} \label{thetazero}
\sum\limits_{\alpha\in\mathbb{Z}^n}\exp\sparen{-\frac{|\alpha-\epsilon_0|^2}{\lambda}} 
\end{align}
and
\begin{align}\label{thetatwo}
\sum\limits_{\alpha\in\mathbb{Z}^n}\frac{|\alpha-\epsilon_0|^2}{\lambda}\exp\sparen{-\frac{|\alpha-\epsilon_0|^2}{\lambda}}
\end{align}
in terms of the parameter $\lambda$. Since the variables $\alpha_1,\alpha_2, . . .\alpha_n$ are indexed independent of each other, we may re-write (\ref{thetazero}) as 
\begin{equation}\label{ind}
\sum\limits_{\alpha_i\in \mathbb{Z}}\sparen{\prod\limits_{i=1}^n\exp\sparen{-\frac{|\alpha_i-\epsilon_{0_i}|^2}{\lambda}}}=\prod\limits_{i=1}^n\sparen{\sum\limits_{\alpha_i\in \mathbb{Z}}\exp\sparen{-\frac{|\alpha_i-\epsilon_{0_i}|^2}{\lambda}}}
\end{equation}
We apply the Euler summation formula with $m=2$ to the sum in parenthesis on the right hand side of (\ref{ind}) so that $h(x)=\exp\sparen{-\frac{|x-\epsilon_{0_i}|^2}{\lambda}}$.
Let $g(x)=\exp\sparen{-x^2}$ then via change of variables $x=\sqrt{\lambda}(u+\epsilon_{0_i})$
\[
R_2=\frac{-1}{2}\int\limits_{\mathbb{R}}B_2(\{x\})h''(x)\,dx=\frac{-1}{2\sqrt{\lambda}}\int\limits_{\mathbb{R}}B_2(\{\sqrt{\lambda}(u+\epsilon_{0_i})\})g''(u)\,du
\]
By properties of the Bernoulli numbers (again, cf Ref[1]): 
\[
|B_2(\{\sqrt{\lambda}(u+\epsilon_{0_i})\})|\leq B_2= \frac{1}{6}
\]
Integrating by parts gives 
\[
\sabs{\int\limits_{\mathbb{R}}g''(u)\,du} \leq \int\limits_{\mathbb{R}}(4u^2+2)e^{-u^2}\,du=6\sqrt{\pi}
\]
\[
|R_2|\leq \sqrt{\frac{\pi}{2\lambda}}
\]
The second term on the right hand side in the Euler summation formula vanishes: 
\[
\sum\limits_{j=1}^{m}\frac{B_j}{j!}h^{j-1}(x)|_{x=-\infty}^{x=\infty}=0
\]
As a result
\[
\sum\limits_{\alpha\in\mathbb{Z}^{n}}\exp\sparen{-\frac{|\alpha-\epsilon_{0}|^2}{\lambda}}\leq \sparen{2\pi\lambda}^{\frac{n}{2}} 
\]
The second sum (\ref{thetatwo}) can be re-written as
\[
\sum\limits_{i=1}^{n}\sparen{\sum\limits_{\alpha_i\in \mathbb{Z}}\frac{|\alpha_i-\epsilon_{0_i}|^2}{\lambda}\exp\sparen{-\frac{|\alpha_i-\epsilon_{0_i}|^2}{\lambda}}}\sum\limits_{\alpha'\in\mathbb{Z}^{n-1}}\exp\sparen{-\frac{|\alpha'-\epsilon'_{0}|^2}{\lambda}}
\]
 Here $\alpha'=(\alpha_1,\alpha_2,. . .  \widehat{\alpha}_i, . . . \alpha_{n})$ and $\epsilon_0'=(\epsilon_{0_1},\epsilon_{0_2},. . .  \widehat{\epsilon}_{0_i}, . . . \epsilon_{0_n})$
Applying the Euler summation formula to
\begin{equation}\label{sumthetatwo}
\sum\limits_{\alpha_i\in \mathbb{Z}}\frac{|\alpha_i-\epsilon_{0_i}|^2}{\lambda}\exp\sparen{-\frac{|\alpha_i-\epsilon_{0_i}|^2}{\lambda}}
\end{equation}
gives  (\ref{sumthetatwo}) is also $\mathcal{O}(\sqrt{\lambda})$. This follows since 
\begin{equation}
 \int\limits_{-\infty}^{\infty}\frac{x^2}{\lambda}\exp\sparen{-\frac{x^2}{\lambda}}\,dx=\sqrt{2\pi \lambda} 
\end{equation}
The details are left to the reader. Therefore (\ref{thetatwo}) is $\mathcal{O}((\lambda)^{\frac{n}{2}})$ as well.

\end{document}